\theoremstyle{definition}
\newtheorem{dfn}{Definition}[section]}
\newtheorem{prop}[dfn]{Proposition}
\newtheorem{thm}[dfn]{Theorem}
{\theoremstyle{definition}
\newtheorem{rem}[dfn]{Remark}}
\newtheorem{lem}[dfn]{Lemma}
\newtheorem{cor}[dfn]{Corollary}
\newtheorem{ques}[dfn]{Question}
{\theoremstyle{definition}
\newtheorem{exa}[dfn]{Example}}
\definecolor{alizarin}{rgb}{0.82, 0.1, 0.26}
\definecolor{azure(colorwheel)}{rgb}{0.0, 0.5, 1.0}
\definecolor{blue(pigment)}{rgb}{0.2, 0.2, 0.6}
\definecolor{denim}{rgb}{0.08, 0.38, 0.74}
\definecolor{mint}{rgb}{0.24, 0.71, 0.54}
\definecolor{parisgreen}{rgb}{0.31, 0.78, 0.47}
\definecolor{persiangreen}{rgb}{0.0, 0.65, 0.58}
\definecolor{seagreen}{rgb}{0.18, 0.55, 0.34}
\definecolor{shamrockgreen}{rgb}{0.0, 0.62, 0.38}
\definecolor{green(pigment)}{rgb}{0.0, 0.65, 0.31}
\newcommand{\marginparstretch}{0.6}
\let\oldmarginpar\marginpar
\renewcommand\marginpar[1]{\-\oldmarginpar[\framebox{\setstretch{\marginparstretch}\begin{minipage}{\marginparwidth}{\raggedleft\tiny #1}\end{minipage}}]{\framebox{\setstretch{\marginparstretch}\begin{minipage}{\marginparwidth}{\raggedright\tiny #1}\end{minipage}}}}
\newcommand\reallywidecheck[1]{%
\savestack{\tmpbox}{\stretchto{%
   \scaleto{%
    \scalerel*[\widthof{\ensuremath{#1}}]{\kern-.6pt\bigwedge\kern-.6pt}%
    {\rule[-\textheight/2]{1ex}{\textheight}}
   }{\textheight}%
}{0.5ex}}%
\stackon[1pt]{#1}{\scalebox{-1}{\tmpbox}}%
}
\def\ang<#1>{\langle #1 \rangle}
\def\bigang<#1>{\left\langle #1 \right\rangle}
\numberwithin{equation}{section}
\newcommand{\Spec}{\operatorname{Spec}}
\newcommand{\Supp}{\operatorname{Supp}}
\newcommand{\Hom}{\operatorname{Hom}}
\newcommand{\Ext}{\operatorname{Ext}}
\newcommand{\Pic}{\operatorname{Pic}}
\newcommand{\Aut}{\operatorname{Aut}}
\newcommand{\Max}{\operatorname{Max}}
\newcommand{\coh}{\operatorname{coh}}
\newcommand{\Db}{\operatorname{D^b}}
\newcommand{\Perf}{\operatorname{Perf}}
\newcommand{\Ker}{\operatorname{Ker}}
\newcommand{\Th}{\mathbf{Th}}
\def\Ser{\mathop{\sf Ser}\nolimits}
\def\Z{\mathop{\sf Z}\nolimits}
\newcommand{\cC}{\mathcal{C}}
\newcommand{\cE}{\mathcal{E}}
\newcommand{\cF}{\mathcal{F}}
\newcommand{\cH}{\mathcal{H}}
\newcommand{\cI}{\mathcal{I}}
\newcommand{\cL}{\mathcal{L}}
\newcommand{\cM}{\mathcal{M}}
\newcommand{\cN}{\mathcal{N}}
\newcommand{\cO}{\mathcal{O}}
\newcommand{\cP}{\mathcal{P}}
\newcommand{\cQ}{\mathcal{Q}}
\newcommand{\cS}{\mathcal{S}}
\newcommand{\cT}{\mathcal{T}}
\newcommand{\cU}{\mathcal{U}}
\newcommand{\cZ}{\mathcal{Z}}
\newcommand{\bC}{\mathbb{C}}
\newcommand{\bP}{\mathbb{P}}
\newcommand{\bR}{\mathbb{R}}
\newcommand{\bS}{\mathbb{S}}
\newcommand{\bZ}{\mathbb{Z}}
\newcommand{\simto}{\xrightarrow{\sim}}
\newcommand{\hookto}{\hookrightarrow}
\renewcommand{\l}{\langle}
\renewcommand{\r}{\rangle}
\newcommand{\spec}{\mathrm{ Spec}_{\triangle}}
\newcommand{\sspec}{\mathrm{ Spec}_{\triangle}^{\sf Ser}}
\newcommand{\bspecx}{\mathrm{ Spec}_{\otimes_X}}
\newcommand{\bspec}{\mathrm{ Spec}_{\otimes}}
\newcommand{\gwedge}{\raisebox{1pt}[0pt][0pt]{$\textstyle \bigwedge^{\raisebox{-1pt}[0pt][0pt]{\tiny\!$i$\,}}$}}
\tikzset{
        DB/.style={circle,draw=black,circle,fill=white,inner sep=0pt, minimum size=4pt},
        DW/.style={circle,draw=black,fill=black,inner sep=0pt, minimum size=4pt},
        cvertex/.style={circle,draw=black,fill=white,inner sep=1pt,outer sep=3pt},
        vertex/.style={circle,fill=black,inner sep=1pt,outer sep=3pt},
        star/.style={circle,fill=yellow,inner sep=0.75pt,outer sep=0.75pt},
        tvertex/.style={inner sep=1pt,font=\scriptsize},
	pvertex/.style={circle,inner sep=1pt,outer sep=2pt,font=\scriptsize},
        gap/.style={inner sep=0.5pt,fill=white}
}
\newcommand*{\defeq}{\mathrel{\rlap{%
                     \raisebox{0.3ex}{$\m@th\cdot$}}%
                     \raisebox{-0.3ex}{$\m@th\cdot$}}%
                     =}
\begin{document}
\title[]{Prime thick subcategories\\ on elliptic curves}
\author[Y.~Hirano]{Yuki Hirano}
\author[G.~Ouchi]{Genki Ouchi}

\address{Y.~Hirano, Department of Mathematics, Kyoto University, Kitashirakawa-Oiwake-cho, Sakyo-ku, Kyoto, 606-8502, Japan}
\email{y.hirano@math.kyoto-u.ac.jp}

\address{G.~Ouchi, Graduate School of Mathematics, Nagoya University, Furocho, Chikusaku, Nagoya, Japan, 464-8602}
\email{genki.ouchi@math.nagoya-u.ac.jp}

\begin{abstract}
 We classify all prime thick subcategories in the derived category of coherent sheaves on elliptic curves, and determine the Serre invariant locus of Matsui spectrum of derived category of coherent sheaves on any smooth projective curves.   
\end{abstract}

\maketitle{}

\section{Introduction}

\subsection{Background} Classification of certain thick subcategories of a given  triangulated category  is a fundamental problem in the study of triangulated categories, and  many authors classified certain thick subcategories of the derived or the singularity categories of schemes \cite{thomason, stevenson, takahashi1, takahashi2,  mt, hirano,ks,el}. For a tensor triangulated category $(\cT,\otimes)$, Balmer introduced the notion of prime ideals of $(\cT,\otimes)$ which are certain thick subcategories of $\cT$, and he  defined a topological space $\Spec_{\otimes}\cT$ whose underlying set is  the set of prime ideals of $(\cT,\otimes)$. Moreover, in the case when $(\cT,\otimes)$ has a unit object $1\in \cT$,  he introduced a sheaf $\cO$ of commutative rings on $\Spec_{\otimes}\cT$   and  proves that for a noetherian scheme $X$, there is an isomorphism 
\[
X\simto \Spec_{\otimes_X}\Perf X
\]
of ringed spaces, where $\Perf X$ denotes the  perfect derived category of  $X$. This shows that the classification of certain class of  thick subcategories enables us to extract geometric information of a noetherian scheme $X$ from the triangulated category $\Perf X$. Recently Matsui introduced the notion of prime thick subcategories of an arbitrary triangulated category $\cT$, and he defined a topological space $\Spec_{\triangle}\cT$ whose underlying set is the set of prime thick subcategories of $\cT$ \cite{matsui}. We call this space $\spec\cT$ {\it Matsui spectrum} of $\cT$. In \cite{matsui},  he proved that for a noetherian scheme $X$ there is an immersion 
\[
X\hookto \Spec_{\triangle}\Perf X
\]
of topological spaces, and it is homeomorphism if $X$ is quasi-affine. In particular, if a smooth variety $X$ is a Fourier-Mukai partner of another smooth variety $Y$, i.e. if we have an exact equivalence $\Db(X)\cong\Db(Y)$ of bounded derived categories of coherent sheaves, the underlying topological space of $Y$ can be embedded into the space $\Spec_{\triangle}\Db(X)$. Since smooth projective complex  varieties of dimension greater than one are determined  only by their underlying Zariski topologies \cite{klos}, studying the topological space $\Spec_{\triangle}\Db(X)$ is important to study Fourier-Mukai partners of a smooth projective variety $X$. 

\subsection{Results}
In this paper, we determine the Matsui spectrum of the derived categories of coherent sheaves on elliptic curves. Let $X$ be an elliptic curve over an algebraically closed field $k$.
Consider the set 
\[I\defeq\{(r,d)\in \bZ^2 \mid r>0, \gcd(r,d)=1\}.\]
For an element $(r,d) \in I$, let $M(r,d)$ be the moduli space of $\mu$-semistable sheaves on $X$ with Chern character $(r,d)$. Then a universal family $\cU_{r,d}$ of $M(r,d)$ defines a Fourier-Mukai equivalence $\Phi_{\cU_{r,d}}\colon \Db(M(r,d))\simto\Db(X)$, and this induces an immersion
\[
M(r,d)\hookto \spec\Db(M(r,d))\xrightarrow{\Phi_{\cU_{r,d}}}\spec\Db(X).
\]
Denote by $M_{r,d}$ the image of the above immersion $M(r,d)\hookto \spec\Db(X)$. The following is the main theorem.

\begin{thm}[Theorem \ref{thm:elliptic curve}]\label{main intro}
We have the equality
\[\spec \Db(X)=\bspecx\Db(X) \sqcup \bigsqcup_{(r,d) \in I} M_{r,d}. \]
\end{thm}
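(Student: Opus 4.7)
\emph{Plan.} The proof splits into showing both set-theoretic inclusions and pairwise disjointness.

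For the inclusion $\spec\Db(X) \supseteq \bspecx\Db(X) \cup \bigcup_{(r,d)\in I} M_{r,d}$, I would transport Balmer primes through the Fourier--Mukai equivalences. The equivalence $\Phi_{\cU_{r,d}}\colon \Db(M(r,d)) \simto \Db(X)$ induces a homeomorphism of Matsui spectra under which the immersed Balmer spectrum $\bspecx\Db(M(r,d)) \cong M(r,d)$ is carried onto $M_{r,d}$, and the natural immersion $\bspecx\Db(X)\hookto\spec\Db(X)$ of Matsui gives the rest.

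For the reverse inclusion, given a Matsui prime $\cP \subsetneq \Db(X)$, the plan is to attach to $\cP$ a distinguished point-like object $E \in \Db(X)$, namely a simple object in the heart of some Bridgeland stability condition. Using Atiyah's classification of indecomposable sheaves on $X$, Bridgeland's description of $\Stab\Db(X)$, and the transitive $\SL_2(\bZ)$-action on primitive Chern characters, one would show that $E$ is determined by $\cP$ up to isomorphism and has primitive Chern character $(r,d)$. If $(r,d) = (0,1)$, then $E$ is a shifted skyscraper sheaf, placing $\cP \in \bspecx\Db(X)$; if $(r,d) \in I$, then $E$ is a $\mu$-stable sheaf, so $\Phi_{\cU_{r,d}}^{-1}(E)$ is a skyscraper on $M(r,d)$ and $\cP \in M_{r,d}$. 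Disjointness of the pieces then follows from the uniqueness of the Chern character $(r,d)$ attached to $\cP$: no prime can be associated to two distinct primitive Chern characters at once.

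The hard part will be extracting the object $E$ from a general Matsui prime $\cP$ and establishing its uniqueness. For Balmer primes this is canonical via tensor-triangular support theory, but for Matsui primes no such support theory is available a priori. The strategy would be to exploit the meet-irreducibility of $\cP$ (the defining property of being prime) to isolate a canonical minimal generator in the ``complement'' of $\cP$, and then use the detailed structure of $\Db(X)$ for elliptic curves---connectedness of the stability manifold, triviality of the Serre functor up to shift, and classification of Fourier--Mukai partners via the moduli $M(r,d)$---to show this generator is stable in an appropriate heart. This last step is what reduces the study of arbitrary primes to the collection of Balmer primes across all Fourier--Mukai partners of $X$.
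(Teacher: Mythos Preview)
Your overall architecture---prove the union, then prove disjointness---matches the paper, and your disjointness argument via the Chern character attached to $\cP$ is essentially the paper's Lemma~4.13(2). The real divergence, and the real gap, is in the reverse inclusion.

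The paper does \emph{not} extract an object $E$ from the complement of $\cP$. It picks any indecomposable object $E$ \emph{inside} $\cP$ (using that on a curve every complex splits into shifted sheaves and every sheaf into torsion $\oplus$ locally free), which on an elliptic curve is automatically $\mu$-semistable. The decisive input, which your plan omits, is the Elagin--Lunts dichotomy: every proper thick subcategory of $\Db(X)$ for a smooth projective curve is either \emph{torsion} (all objects are sums of shifts of torsion sheaves) or \emph{torsion-free}. Coupled with the elementary observation that torsion thick subcategories are automatically $\otimes_X$-ideals, this dichotomy does all the work: if $\cP$ is torsion it is a prime ideal, hence Balmer; if $\cP$ is torsion-free, take $E\in\cP$ indecomposable locally free, set $(r,d)$ to be its reduced Chern character, and then $\Phi_{\cU_{r,d}}^{-1}(E)$ is a torsion sheaf on $M(r,d)$, so $\Phi_{\cU_{r,d}}^{-1}(\cP)$ contains a nonzero torsion object and by Elagin--Lunts again must itself be torsion, hence a prime ideal, hence $\cP\in M_{r,d}$. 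No Bridgeland stability, no $\SL_2(\bZ)$-action, no analysis of $\overline{\cP}$ is needed.

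Your route has a genuine gap at the step you yourself flag as hard. Meet-irreducibility of $\cP$ gives you the thick subcategory $\overline{\cP}$, not a single object, and nothing you list (connectedness of $\Stab$, triviality of $\bS$ up to shift) produces a canonical stable generator of $\overline{\cP}/\cP$ without already knowing the structure of thick subcategories. More seriously, even if you succeed in finding a stable $E\notin\cP$ with $\Phi_{\cU_{r,d}}^{-1}(E)$ a skyscraper, this only tells you that one skyscraper lies outside $\Phi_{\cU_{r,d}}^{-1}(\cP)$; it does not force $\Phi_{\cU_{r,d}}^{-1}(\cP)$ to equal $\cS_{M(r,d)}(p)$ for any $p$. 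To close that implication you would again need something equivalent to the torsion/torsion-free dichotomy. Finally, your description via ``point-like'' objects (shifted skyscrapers or stable bundles) only addresses closed points of $X$ and of each $M(r,d)$; the generic points, which are also in the Balmer spectrum and hence in the claimed decomposition, are not covered by that picture.
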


We also introduce and discuss the Serre invariant locus of Matsui spectrum. For a triangulated category $\cT$ with a Serre functor $\bS$, we define a subspace
\[
\Spec^{\Ser}_{\triangle}\cT\subseteq\Spec_{\triangle}\cT
\]
of Matsui spectrum of $\cT$ to be the set of $\bS$-invariant prime thick subcateories of $\cT$.  This space is invariant under arbitrary exact equivalences of triangulated categories with Serre functors. By the results in \cite{matsui}, it is easy to see that if a Gorenstein projective variety $X$ has an ample or anti-ample canonical bundle, then we have a homeomorphism
\begin{equation}\label{top reconstruction}
\Spec^{\Ser}_{\triangle}\Perf X\cong X.
\end{equation}
This is a topological  reconstruction of a Gorenstein projective variety $X$ with ample or anti-ample canonical bundle from the triangulated category $\Perf X$. 
As an immediate consequence of Theorem \ref{main intro} and the homeomorphism \eqref{top reconstruction}, we obtain explicit descriptions of the Serre invairant loci $\sspec\Db(X)$ on any smooth projective curves $X$.

\subsection{Notation and Convention}
\begin{itemize}
    \item In this paper, we treat only essentially small triangulated categories over a field $k$. 
    \item Functors between triangulated categories that we consider are all  exact functors.
    \item For a proper morphism $f\colon X\to Y$ of smooth varieties, we write $f_*\colon \Db(X)\to \Db(Y)$ and  $f^*\colon \Db(Y)\to \Db(X)$ for the derived push-forward and the derived pull-back respectively. Similarly, we denote by $\otimes_X$ the derived tensor product on $X$.
    \item For a variety (or a scheme) $X$,  a point in $X$ is not necessarily a closed point. 
\end{itemize}

\subsection{Acknowledgements}
The authors would be grateful to  W.~Hara for helpful discussions, and H.~Matsui for Remark 4.15. They also would like to thank the referee for careful reading this article, and for helpful comments.   Y.H. is supported by JSPS KAKENHI Grant Number 19K14502. G.O. is supported by JSPS KAKENHI Grant Number 19K14520.





\section{Spectrum of prime thick subcategories}

Throughout this section $\cT$ denotes an essentially small triangulated category over a field $k$.

\subsection{Prime thick subcategories of triangulated categories}
Following \cite{matsui,mt}, we recall the basics of prime thick subcategories of triangulated categories.

A {\it thick subcategory} of $\cT$ is a full triangulated subcategory  of $\cT$ that is closed under taking direct summands. Let $\Th(\cT)$ be the set of all thick subcategories of $\cT$. The set $\Th(\cT)$ is a partially ordered set with respect to the inclusion relation.  
For a full subcategory $\cE$ of $\cT$, we define 
\[ \mathsf{Z}(\cE)\defeq\{\cI \in \Th(\cT) \mid \cI \cap \cE=\emptyset\}.\] 
Then it is easy to see the following (see \cite[Definition 2.1]{mt}). 
\begin{itemize}
    \item[$(1)$]We have  $\mathsf{Z}(\cT)=\emptyset$ and  $\mathsf{Z}(\emptyset)=\Th(\cT)$.
    \item[$(2)$]For a family $\{\cE_i\}_{i \in I}$ of full subcategories of $\cT$, we have  $\bigcap_{i \in I}\mathsf{Z}(\cE_i)=\mathsf{Z}(\bigcup_{i \in I}\cE_i)$.
    \item[$(3)$]For full subcategories $\cE_1, \cE_2$ of $\cT$, we have
    $\mathsf{Z}(\cE_1) \bigcup \cZ(\cE_2)=\mathsf{Z}(\cE_1 \oplus \cE_2)$, where $\cE_1 \oplus \cE_2\defeq \{E_1 \oplus E_2 \in \cT \mid E_1 \in \cE_1, E_2 \in \cE_2 \}$.
\end{itemize}
In particular, $\Th(\cT)$ has the topology whose closed subsets are of the form $\mathsf{Z}(\cE)$.

\begin{dfn}[{\cite[Definition 2.2]{matsui}}] A thick subcategory $\cP$ of $\cT$ is said to be {\it prime} if there exists a unique minimal element $\overline{\cP}$ in the partially orderd set $ \{\cQ\in\Th(\cT)\mid \cP\subsetneq\cQ\}$. We denote by  
\[
\Spec_{\triangle}\cT
\] 
the set of prime thick subcategories of $\cT$. The set $\Spec_{\triangle}\cT$ has the induced topology from $\Th(\cT)$, which we call the {\it Zariski topology}. We call this topological space the {\it Matsui Spectrum} of $\cT$.
\end{dfn}

An exact equivalence $\Phi\colon \cT\simto \cT'$ of triangulated categories induces a homeomorphism
\begin{equation}\label{homeo}
\Phi\colon \Spec_{\triangle}\cT\simto\Spec_{\triangle}\cT'
\end{equation}
that sends $\cP$ to  $\Phi(\cP)\subsetneq \cT'$. 
In particular, the autoequivalence group $\Aut\cT$ acts on the Matsui spectrum  $\Spec_{\triangle}\cT$.

Let $\cU\in \Th(\cT)$ be a thick subcategory, and write $\pi\colon\cT\to\cT/\cU$ for the natural quotient functor. Then the inverse images define a continuous map 
\[
\pi^{-1}\colon \Th(\cT/\cU)\to\Th(\cT).
\]
 A map $f\colon X\to Y$ between topological spaces is called an  {\it immersion} if it  induces a homeomorphsim $X\simto f(X)$.

\begin{prop}[{\cite[Proposition 2.9]{matsui}}]\label{verdier}
The map $\pi^{-1}$ restricts to an immersion
\[\pi^{-1}\colon \Spec_{\triangle}\cT/\cU\to\Spec_{\triangle}\cT,\] and its image is $\{\cP\in\Spec_{\triangle}\cT\mid \cU\subseteq\cP\}$.

\end{prop}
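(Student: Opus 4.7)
The plan is to reduce the claim to Verdier's localization theorem combined with the observation that primeness is a purely poset-theoretic notion. Verdier's theorem supplies an order-preserving bijection
\[
\pi^{-1} \colon \Th(\cT/\cU) \simto \{\cP \in \Th(\cT) \mid \cU \subseteq \cP\},
\]
whose inverse sends a thick subcategory $\cP \supseteq \cU$ to the full subcategory $\pi(\cP) \subseteq \cT/\cU$ on the same objects; here one uses that $\cP \supseteq \cU$ forces $\cP$ to be closed under isomorphism in $\cT/\cU$, so that $\pi(\cP)$ is indeed a thick subcategory. Continuity of $\pi^{-1} \colon \Th(\cT/\cU) \to \Th(\cT)$ has already been asserted in the excerpt, so what remains is to show (i) that this bijection restricts to a bijection between the prime loci with the stated image, and (ii) that the restriction is a homeomorphism onto its image.

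For (i), observe that $\cP \in \Th(\cT)$ is prime iff the poset $\{\cR \in \Th(\cT) \mid \cP \subsetneq \cR\}$ admits a minimum. When $\cU \subseteq \cP$, every $\cR \supsetneq \cP$ automatically contains $\cU$, so this poset coincides with the image under $\pi^{-1}$ of $\{\cS \in \Th(\cT/\cU) \mid \pi(\cP) \subsetneq \cS\}$. Since Verdier's bijection preserves the order, a unique minimum on one side corresponds to a unique minimum on the other; hence $\cP$ is prime iff $\pi(\cP)$ is prime. This identifies the image of $\pi^{-1}|_{\Spec_{\triangle}(\cT/\cU)}$ as exactly $\{\cP \in \Spec_{\triangle}\cT \mid \cU \subseteq \cP\}$.

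For (ii), I would verify that the map is closed onto its image. For any full subcategory $\cF \subseteq \cT/\cU$, set $\widetilde{\cF} \defeq \{X \in \cT \mid \pi(X) \in \cF\}$, viewed as a full subcategory of $\cT$. Since $\pi$ is the identity on objects, the object-set of $\pi(\cP)$ agrees with that of $\cP$, and the object-set of $\widetilde{\cF}$ agrees with that of $\cF$, so
\[
\pi^{-1}(\Z(\cF)) = \Z(\widetilde{\cF}) \cap \{\cP \in \Th(\cT) \mid \cU \subseteq \cP\}.
\]
Hence the image of every closed set under $\pi^{-1}$ is closed in the subspace topology on $\{\cP \in \Th(\cT) \mid \cU \subseteq \cP\}$, and restricting to the prime locus completes the proof.

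The main obstacle is really just careful bookkeeping: one must track subcategories across the Verdier quotient, where isomorphism classes enlarge. Once one notes that thick subcategories of $\cT$ containing $\cU$ are automatically iso-closed in $\cT/\cU$ (via the defining triangles of the Verdier localization), the object-level comparisons between $\cP$, $\pi(\cP)$, $\cF$ and $\widetilde{\cF}$ become formal, and no further input beyond Verdier's correspondence is needed.
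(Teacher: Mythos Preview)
The paper does not include a proof of this proposition; it is quoted directly from \cite[Proposition 2.9]{matsui} without argument, so there is no in-paper proof to compare against. Your proposal is correct and follows the natural route: Verdier's localization supplies the order isomorphism $\Th(\cT/\cU)\cong\{\cQ\in\Th(\cT)\mid\cU\subseteq\cQ\}$; primeness, being a condition solely on the poset of thick subcategories strictly above a given one, is preserved and reflected by any order isomorphism; and your identity $\pi^{-1}(\Z(\cF))=\Z(\widetilde{\cF})\cap\{\cQ\in\Th(\cT)\mid\cU\subseteq\cQ\}$ shows $\pi^{-1}$ is closed onto its image, hence an immersion when restricted to the prime locus. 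The bookkeeping remark about iso-closure in $\cT/\cU$ is the one genuine subtlety, and you have handled it correctly.
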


Let $\Phi\colon\cT\to \cT'$ be a fully faithful exact functor that admits a right adjoint functor $\Psi\colon \cT'\to \cT$. Then $\Psi$ induces an equivalence
\[
\cT'/\Ker(\Psi)\simto \cT.
\]
We denote by 
\begin{equation}\label{admissibe immersion}
    \Phi_*\defeq \Psi^{-1}\colon \spec\cT\hookto\spec\cT'
\end{equation}
the immersion induced from the quotient functor $\Psi$. If $\Phi$ is an equivalence, the immersion $\Phi_*$ is nothing but the homeomorphism $\Phi\colon \spec\cT\simto \spec\cT'$ as in \eqref{homeo}.

\subsection{Balmer spectrum} 
A {\it tensor structure} on $\cT$ is an associative symmetric bifunctor $\otimes\colon \cT\times \cT\to \cT$ that is exact in each variable. 
A {\it tensor triangulated category} $(\cT, \otimes)$ is  a  triangulated category $\cT$ with a tensor structure $\otimes$ on $\cT$. 

\begin{rem}
In this paper, we do not require the existence of a unit object in the definition of tensor triangulated categories.
\end{rem}

\begin{exa}The usual tensor products $\otimes_X$ on the perfect complexes $\Perf X$ on a noetherian scheme $X$ is a tensor structure on $\Perf X$. In particular, if $X$ is a regular scheme,  $\Db(X)$ admits a tensor structure $\otimes_X$. 
\end{exa}

Let $(\cT,\otimes)$ be a tensor triangulated category. A thick subcategory $\cI \in \Th(\cT)$ is called an {\it ideal} of $(\cT, \otimes)$ if $A \otimes E \in \cI$ holds for any objects $A \in \cT$ and $E \in \cI$.

\begin{dfn}
An ideal $\cP\neq \cT$ of $(\cT, \otimes)$ is said to be  {\it prime}  if for objects $A,B \in \cT$, the condition $A \otimes B \in \cP$ implies $A\in \cP$ or $B \in \cP$.
The set of all prime ideals of $(\cT,\otimes)$ is denoted by 
\[
\Spec_{\otimes}\cT,
\] and it is called the {\it Balmer spectrum} of $(\cT,\otimes)$. 
We regard $\Spec_\otimes\cT$ as a topological subspace of $\Th(\cT)$.
\end{dfn}

Let $X$ be a noetherian scheme. For simplicity, we  will write  
\[
\Spec_{\otimes}\Perf X\defeq\Spec_{\otimes_X}\Perf X
\]
when no confusion arises.
For an object $F\in\Perf X$ we define the {\it support}
\[
\Supp F\defeq\{x\in X\mid F_x\not\cong 0 \mbox{ in }\Perf\cO_{X,x}\}
\]
Note that 
$\Supp F=\bigcup_{i\in \bZ}\Supp \cH^i(F)$. Since the support  of each sheaf cohomology $\cH^i(F)$  is closed in $X$, and $\cH^i(F)\neq0$ only for finitely many $i\in\bZ$, the support $\Supp F$ is closed in $X$.
\begin{dfn}
For $x \in X$, we define the thick subcategory  
\[
\cS_X(x)\defeq\{F \in \Perf X \mid x\notin \Supp(F)\}
\]
of $\Perf X$.
\end{dfn}

In \cite{balmer} Balmer reconstruct a noetherian scheme $X$ from the  tensor triangulated category $(\Perf X,\otimes_{X})$. In this paper, the following is enough for our purpose. 

\begin{prop}[{\cite[Corollary 5.6 ]{balmer}}]\label{prop Balmer}
For $x \in X$, the thick subcategory $\cS_X(x)\in\Th(\Perf X)$ is a prime ideal of $(\Perf X, \otimes_X)$. Moreover, the map
\[
\cS_X\colon X\simto \Spec_{\otimes}\Perf X
\]
is a homeomorphism.
\end{prop}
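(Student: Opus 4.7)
The plan is to prove the two assertions in sequence: first that each $\cS_X(x)$ is a prime ideal, then that the map $\cS_X$ is a homeomorphism. Throughout, the central technical input will be Thomason's classification of thick tensor ideals of $\Perf X$ in terms of specialization-closed subsets of $X$.

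For the ideal property of $\cS_X(x)$, I would observe that it is closed under shifts, cones and summands because the formation of stalks is exact. It is a tensor ideal since for a perfect complex $F$ with $F_x \cong 0$ and any $G \in \Perf X$ one has $(F \otimes_X G)_x \cong F_x \otimes^{\mathbf{L}}_{\cO_{X,x}} G_x \cong 0$. For primality, the key local fact is that if $A, B \in \Perf \cO_{X,x}$ are both nonzero, then $A \otimes^{\mathbf{L}}_{\cO_{X,x}} B \neq 0$; this follows from passing to minimal free resolutions and using Nakayama's lemma applied to the highest nonvanishing term of $A \otimes^{\mathbf{L}} B \otimes k(x)$, which one rewrites as $(A \otimes^{\mathbf{L}} k(x)) \otimes_{k(x)} (B \otimes^{\mathbf{L}} k(x))$. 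Applying this stalkwise at $x$ shows $A \otimes_X B \in \cS_X(x)$ forces $A \in \cS_X(x)$ or $B \in \cS_X(x)$.

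To show $\cS_X$ is a homeomorphism, I would proceed in three steps. Continuity follows from the computation
\[
\cS_X^{-1}\bigl(\{\cP \in \Spec_\otimes \Perf X \mid E \notin \cP\}\bigr) = \{x \in X \mid x \in \Supp E\} = \Supp E,
\]
which is closed in $X$ since $X$ is noetherian and $E$ is perfect; the sets on the left hand side form a basis of closed sets for the Zariski topology on $\Spec_\otimes \Perf X$, so $\cS_X$ is continuous and in fact closed onto its image. For bijectivity, given a prime ideal $\cP \subsetneq \Perf X$, I would consider the specialization-closed subset
\[
V(\cP) \defeq \bigcup_{E \in \cP} \Supp E \subseteq X
\]
and invoke Thomason's classification (as used in \cite{balmer}) to recover $\cP$ from $V(\cP)$ as $\cP = \{E \in \Perf X \mid \Supp E \subseteq V(\cP)\}$. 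The primality of $\cP$ then translates into the condition that $X \setminus V(\cP)$ is irreducible: if $X \setminus V(\cP) = Z_1 \cup Z_2$ with $Z_i$ properly larger closed subsets of $X \setminus V(\cP)$, one produces perfect complexes $E_1, E_2$ supported on $V(\cP) \cup (X \setminus Z_i)$ with $E_1 \otimes E_2 \in \cP$ but $E_i \notin \cP$, contradicting primality. Noetherianity then ensures $X \setminus V(\cP)$ has a unique generic point $x$, and by construction $V(\cP) = X \setminus \overline{\{x\}}^{\,c} \cdots$ so that $\cP = \cS_X(x)$. Injectivity is immediate: if $\cS_X(x) = \cS_X(y)$, then $x$ and $y$ have the same closure, hence coincide.

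The main obstacle is the surjectivity argument, which really rests on Thomason's classification theorem and on the fact that prime ideals correspond precisely to complements of irreducible closed subsets (equivalently, to points). Getting from an abstract prime $\cP$ back to a geometric point requires both the classification and the noetherian hypothesis to extract a generic point of $X \setminus V(\cP)$; the translation between categorical primality and topological irreducibility is the nontrivial step, and is exactly the content of \cite[Corollary 5.6]{balmer} that we are citing.
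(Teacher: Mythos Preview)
The paper does not give a proof of this proposition; it is simply quoted from \cite[Corollary~5.6]{balmer}. Your argument for the first assertion (that each $\cS_X(x)$ is a prime $\otimes$-ideal) and for continuity and closedness of $\cS_X$ is correct and standard.

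The surjectivity argument, however, has a genuine error: you are applying ``irreducible with unique generic point'' to the wrong set. The complement $X\setminus V(\cP)$ is generization-closed, not closed. When $X$ is irreducible with generic point $\xi$, every nonempty generization-closed subset contains $\xi$ and is irreducible with generic point $\xi$; so your criterion recovers $\xi$ for \emph{every} $\cP$ and cannot distinguish primes. Conversely, if $X$ is two lines meeting at a point $y$, then $\cS_X(y)$ is prime but $X\setminus V(\cS_X(y))=\{y,\xi_1,\xi_2\}$ is reducible. The unfinished formula ``$V(\cP)=X\setminus\overline{\{x\}}^{\,c}\cdots$'' is a symptom of this: the identity you are reaching for does not hold. (If one insists on working with $X\setminus V(\cP)$, the point $x$ is its unique \emph{closed} point, not its generic point.)

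The fix is to use the other natural set. Put
\[
Z(\cP)\defeq\bigcap_{E\notin\cP}\Supp E.
\]
Primality gives $\Supp(E\otimes_X F)=\Supp E\cap\Supp F$ with $E\otimes_X F\notin\cP$, so the family $\{\Supp E:E\notin\cP\}$ is closed under finite intersection; noetherianity then yields $Z(\cP)=\Supp E_0$ for a single $E_0\notin\cP$, in particular $Z(\cP)\neq\emptyset$. If $Z(\cP)=Z_1\cup Z_2$ with $Z_i$ proper closed, pick perfect $F_i$ with $\Supp F_i=Z_i$; by Thomason $E_0$ lies in the tensor ideal generated by $F_1\oplus F_2$, forcing some $F_i\notin\cP$ and hence $Z(\cP)\subseteq Z_i$, a contradiction. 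Thus $Z(\cP)$ is irreducible; taking $x$ to be \emph{its} generic point, one checks $\cP=\cS_X(x)$ using Thomason once more. This is the argument underlying Balmer's proof.
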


\begin{rem}\label{rem:supp}
Proposition \ref{prop Balmer} implies that a prime ideal $\cP$ of $(\Perf X,\otimes_X)$ does not contain an object $F\in\Perf X$ with $\Supp F=X$. In particular, $\cP$ does not contain any locally free sheaves. 
\end{rem}

\begin{thm}[{\cite[Corollary 3.8, Corollary 4.9]{matsui}}]\label{matsui thm}
We have the following:
\begin{itemize}
\item[$(1)$] 
Let $\cI$ be an ideal of $(\Perf X,\otimes_X)$. Then $\cI$ is a prime thick subcategory if and only if $\cI$ is a prime ideal. 
\item[$(2)$] We have an immersion
\[
\cS_X\colon X\hookto \Spec_{\triangle}\Perf X
\]
whose image is $\bspec\Perf X$, and it is a homeomorphism if $X$ is quasi-affine. 
\end{itemize}
\end{thm}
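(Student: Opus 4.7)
The plan is to derive part (2) from part (1) via Balmer's reconstruction (Proposition \ref{prop Balmer}), and to do the real work in part (1), which is a local comparison between the two notions of ``prime'' for ideals of $(\Perf X,\otimes_X)$.

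For the direction \emph{prime ideal implies prime thick}, Proposition \ref{prop Balmer} identifies a prime ideal as $\cS_X(x)$ for a unique $x\in X$, so the task is to exhibit the unique minimal cover $\overline{\cS_X(x)}$. My candidate is $\l\cS_X(x),K_x\r$ where $K_x$ is a canonical object supported on $\overline{\{x\}}$, for instance the pushforward of a Koszul complex along $\overline{\{x\}}\hookto X$. Given any thick $\cQ\supsetneq\cS_X(x)$, I would pick $F\in\cQ\setminus\cS_X(x)$, so that $x\in\Supp F$ and the stalk $F_x$ is a nonzero perfect complex over $\cO_{X,x}$; by the Hopkins--Neeman classification of thick subcategories of $\Perf\cO_{X,x}$ applied to $\l F_x\r$, the Koszul complex at the maximal ideal of $\cO_{X,x}$ lies in $\l F_x\r$, and globalizing (absorbing the error outside $\overline{\{x\}}$ into $\cS_X(x)$) should deliver $K_x\in\cQ$.

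For the converse \emph{prime thick implies prime ideal}, I plan to argue by contradiction. Suppose $\cI$ is a prime thick ideal and $A\otimes_X B\in\cI$ while $A,B\notin\cI$. Both $\l\cI,A\r$ and $\l\cI,B\r$ are proper covers of $\cI$ and therefore contain $\overline{\cI}$; pick $F\in\overline{\cI}\setminus\cI$. Since $F$ is built from $\cI\cup\{A\}$ by triangles, shifts, and summands, applying the exact functor $-\otimes_X B$ together with $\cI\otimes_X\Perf X\subseteq\cI$ and $A\otimes_X B\in\cI$ gives $F\otimes_X B\in\cI$; similarly, applying $-\otimes_X F$ to the expression of $F$ built from $\cI\cup\{B\}$ yields $F\otimes_X F\in\cI$. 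A Nakayama argument with a minimal free resolution of the stalk $F_x$ over $\cO_{X,x}$ shows $F_x\otimes_{\cO_{X,x}}F_x=0\Rightarrow F_x=0$, whence $\Supp(F\otimes_X F)=\Supp F$. Combined with Thomason's classification writing $\cI=\{G\in\Perf X:\Supp G\subseteq Y\}$ for a specialization-closed $Y$, this forces $F\in\cI$, contradicting the choice of $F$.

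Part (2) is then short: (1) combined with Proposition \ref{prop Balmer} gives the immersion with image $\bspec\Perf X$ directly. For the quasi-affine case, I would show that every thick subcategory of $\Perf X$ is automatically a $\otimes_X$-ideal: for quasi-affine noetherian $X$ the structure sheaf $\cO_X$ is a thick generator of $\Perf X$, so every $A\in\Perf X$ is built from $\cO_X$ by triangles, shifts, and summands, and hence for $B\in\cI$ and any such thick $\cI$, $A\otimes_X B$ is built from $\cO_X\otimes_X B=B$ and lies in $\cI$. The main obstacle is the construction and verification of $\overline{\cS_X(x)}$ in part (1): checking that every thick subcategory strictly containing $\cS_X(x)$ contains the same canonical Koszul object requires a careful local-to-global passage combining the Hopkins--Neeman classification on $\Perf\cO_{X,x}$ with Thomason's classification on $X$ to patch the local witnesses back to a single global object.
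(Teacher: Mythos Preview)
The paper does not contain a proof of this statement: it is quoted as \cite[Corollary~3.8, Corollary~4.9]{matsui} without argument, so there is nothing in the present paper to compare your proposal against. That said, your outline is sound and tracks the ingredients of Matsui's original proof. The converse direction in (1), arguing via $F\otimes_X F\in\cI$ together with $\Supp(F\otimes_X F)=\Supp F$ and Thomason's classification of tensor ideals, is correct and slick. For the forward direction, your globalization worry has a standard resolution that is cleaner than patching a global Koszul object by hand: choose an affine open $U\ni x$, use $\Perf_{X\backslash U}X\subseteq\cS_X(x)$ to identify thick subcategories above $\cS_X(x)$ in $\Perf X$ with thick subcategories above $\cS_U(x)$ in $\Perf U$, and then invoke $\Perf U/\cS_U(x)\simeq\Perf\cO_{X,x}$ (this is \cite[Lemma~3.2]{matsui}), where Hopkins--Neeman on the local ring supplies the unique minimal nonzero thick subcategory directly. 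Your argument for (2), including the quasi-affine step via $\cO_X$ being a thick generator of $\Perf X$, is correct.
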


Let $i\colon U\hookto X$ be an open immersion. Write $Z\defeq X\backslash U$ for the complement of $U$, and define
\[
\Perf_ZX\defeq\{F\in \Perf X\mid \Supp(F)\subseteq Z\}.
\]
It is well known that the restriction functor 
\[
\pi_U\defeq(-)|_U\colon \Perf X\to \Perf U
\]
induces an equivalence between $\Perf U$ and the idempotent completion of the Verdier quotient $\Perf X/\Perf_ZX$  (see \cite[Theorem 2.13]{balmer2}). By \cite[Proposition 2.11]{matsui}, we have a homeomorphism
\begin{equation}\label{eq:up to direct}
\Th(\Perf X/\Perf_ZX)\cong\Th(\Perf U)
\end{equation}
that restricts to the homeomorphism
\[
\spec(\Perf X/\Perf_ZX)\cong\spec \Perf U.
\]
Thus by Proposition \ref{verdier} we have an immersion
\[
\pi_U^{-1}\colon\spec \Perf U\hookto \spec \Perf X.
\]
The following says that the immersion in Theorem \ref{matsui thm} commutes with open immersions.

\begin{lem}\label{open commute}
For any $x\in U$,  we have 
\[
\pi_U^{-1}(\cS_U(x))=\cS_X(x).
\]
In particular, the following diagram is commutative:
\[
\begin{tikzcd}
 \spec\Perf U\arrow[rr,"\pi_U^{-1}"]&&\spec\Perf X\\
 U\arrow[rr,"i"]\arrow[u,"\cS_U"]&&X.\arrow[u,"\cS_X"']
 \end{tikzcd}
\]
\begin{proof}
 For an object $F\in \Perf X$, 
\begin{eqnarray*}
F\in \pi_U^{-1}(\cS_U(x))&\Longleftrightarrow& \pi_U(F)=F|_U\in \cS_U(x)\\
&\Longleftrightarrow& x\notin \Supp(F|_U)\\
&\Longleftrightarrow& (F|_U)_x\cong 0
\end{eqnarray*}
Since $x\in U$, we have $F_x\cong (F|_U)_x$. Hence $F\in \pi_U^{-1}(\cS_U(x))$ if and only if $F\in \cS_X(x)$.
\end{proof}
\end{lem}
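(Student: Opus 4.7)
The plan is to unwind the definitions of $\pi_U^{-1}(\cS_U(x))$ and $\cS_X(x)$ and compare them membership-wise, leveraging the fact that stalks of a coherent complex at a point $x\in U$ are unchanged by restriction to the open set $U$.

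First, I would fix $x\in U$ and an object $F\in\Perf X$, and unpack the definition of the preimage: by construction $F\in\pi_U^{-1}(\cS_U(x))$ if and only if $\pi_U(F)=F|_U$ lies in $\cS_U(x)$, which by the definition of $\cS_U(x)$ means exactly that $x\notin\Supp(F|_U)$, i.e.\ $(F|_U)_x\cong 0$ in $\Perf\cO_{U,x}$.

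Second, I would observe that because $U\subseteq X$ is open and $x\in U$, the local rings coincide, $\cO_{U,x}=\cO_{X,x}$, and the stalk of a complex is computed identically before or after restriction, so $(F|_U)_x\cong F_x$ as objects of $\Perf\cO_{X,x}$. Hence $x\notin\Supp(F|_U)$ if and only if $x\notin\Supp(F)$, which is precisely the condition $F\in\cS_X(x)$. This gives the equality $\pi_U^{-1}(\cS_U(x))=\cS_X(x)$.

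Finally, the commutativity of the diagram is immediate: by the definition of the maps involved, both $\pi_U^{-1}\circ\cS_U$ and $\cS_X\circ i$ send $x\in U$ to the same prime thick subcategory $\cS_X(x)$ of $\Perf X$. No step presents a real obstacle here; the only subtlety is making sure to invoke the identification $(F|_U)_x\cong F_x$ for $x\in U$, which is where the hypothesis $x\in U$ is used essentially.
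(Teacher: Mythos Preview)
Your proof is correct and follows essentially the same approach as the paper's: both unwind the definition of $\pi_U^{-1}(\cS_U(x))$ through the chain of equivalences $F\in\pi_U^{-1}(\cS_U(x))\Leftrightarrow F|_U\in\cS_U(x)\Leftrightarrow (F|_U)_x\cong 0$, then use the identification $(F|_U)_x\cong F_x$ for $x\in U$ to conclude. The only difference is that you make the identification $\cO_{U,x}=\cO_{X,x}$ explicit and add a closing sentence on the diagram, but the argument is the same.
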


\begin{prop}\label{closure}
The closure of $\bspec \Perf X$ in $\spec\Perf X$ is $\Z(\cE)$, where $\cE\defeq\{E\in \Perf X\mid \Supp E=X\}$.
\begin{proof}
By Remark \ref{rem:supp}, we have an inclusion  $\bspec \Perf X\subseteq \Z(\cE)$. Assume that there is a  family $\cF$ of objects in $\Perf X$ such that $\bspec \Perf X\subseteq \Z(\cF)$. It is enough to show that $\cF\subseteq \cE$.   If the family $\cF$ contains an object $F$ such that $\Supp F\neq X$,  there is a point $x\in X$ with $x\notin \Supp F$. Then we obtain $F\in \cS_X(x)$, which contradicts to $\bspec \Perf X\subset \Z(\cF)$. 
\end{proof}
\end{prop}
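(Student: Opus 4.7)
The plan is to prove the two inclusions $\overline{\bspec\Perf X}\subseteq\Z(\cE)$ and $\Z(\cE)\subseteq\overline{\bspec\Perf X}$ separately, using the fact that the closed subsets of $\spec\Perf X$ are exactly the sets of the form $\Z(\cF)$.

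First I would establish the easy inclusion $\bspec\Perf X\subseteq\Z(\cE)$. By Proposition \ref{prop Balmer} and Remark \ref{rem:supp}, every prime ideal $\cP\in\bspec\Perf X$ has the form $\cS_X(x)$ for some $x\in X$, hence omits every object supported on all of $X$. Thus $\cP\cap\cE=\emptyset$, i.e., $\cP\in\Z(\cE)$. Since $\Z(\cE)$ is closed, this gives $\overline{\bspec\Perf X}\subseteq\Z(\cE)$.

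For the reverse inclusion, I would take an arbitrary family $\cF\subseteq\Perf X$ with $\bspec\Perf X\subseteq\Z(\cF)$ and show that $\Z(\cE)\subseteq\Z(\cF)$. Because the closure is the intersection of all closed sets containing $\bspec\Perf X$, this is enough. The key observation is that $\cF\subseteq\cE$ implies $\Z(\cE)\subseteq\Z(\cF)$, since enlarging the avoided family makes the closed set smaller. To verify $\cF\subseteq\cE$, suppose for contradiction that some $F\in\cF$ satisfies $\Supp F\subsetneq X$. Choose $x\in X\setminus\Supp F$; then $F\in\cS_X(x)$. But $\cS_X(x)\in\bspec\Perf X\subseteq\Z(\cF)$ says $\cS_X(x)\cap\cF=\emptyset$, contradicting $F\in\cS_X(x)\cap\cF$.

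I do not expect a real obstacle: the argument is essentially a tautological unravelling of the definition of $\Z(-)$ combined with Balmer's description of prime ideals via support. The only subtlety is remembering that a point $x\in X$ need not be closed, so that $X\setminus\Supp F$ genuinely contains a (not necessarily closed) point whenever $\Supp F\neq X$; this is automatic from our convention on points of schemes.
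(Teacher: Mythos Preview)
Your proposal is correct and follows essentially the same argument as the paper: first use Remark \ref{rem:supp} to get $\bspec\Perf X\subseteq\Z(\cE)$, then show any closed set $\Z(\cF)$ containing $\bspec\Perf X$ must satisfy $\cF\subseteq\cE$ by the same contradiction via $\cS_X(x)$. Your write-up is slightly more explicit about why $\cF\subseteq\cE$ yields $\Z(\cE)\subseteq\Z(\cF)$ and about the role of non-closed points, but the substance is identical.
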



\section{Prime thick subcategories on smooth varieties}


\subsection{Intersections of Balmer spectrums of smooth varieties}

Let $X$ and $Y$ be smooth  varieties, and   $\Phi \colon \Db(X)\hookto\Db(Y)$ a fully faithful functor that admits a right adjoint functor $\Psi\colon \Db(Y)\to \Db(X)$. Then we have an immersion
\[
\Phi_*\colon \spec\Db(X)\hookto\spec\Db(Y)
\]
defined as in \eqref{admissibe immersion}. In this section, we discuss sufficient conditions for the intersection $\Phi_*(\bspec\Db(X))\cap\bspec\Db(Y)$ to be  empty or non-empty respectively. For this, we first prove some lemmas.

\begin{lem}[cf. {\cite[Lemma 5.3]{bm}}]\label{supp lem}
Let $F\in \Db(X)$ be an object, and let $x\in X$ be a closed point. Then 
the following are equivalent:
\begin{itemize}
    \item[$(1)$] $x\notin\Supp(F)$
    \item[$(2)$] $\Hom_{\Db(X)}(k(x),F[l])=0$ for all $l\in\bZ$
    \item[$(3)$] $\Hom_{\Db(X)}(F,k(x)[l])=0$ for all $l\in \bZ$
\end{itemize} 
\begin{proof}
Denote by $j_x\colon \Spec k(x)\hookto X$ the closed immersion, and set $d\defeq \dim X$.  Since $j_x^*F\cong\bigoplus_{m\in\bZ}H^m(j_x^*F)[-m]$ in $\Db(k(x))$, the adjunctions $j_{x*}\dashv j_x^!$ and $j_x^*\dashv j_{x*}$ implies the  following isomorphisms
\begin{equation}\label{1st isom}
\Hom_{\Db(X)}(k(x),F[l])\cong \Hom_{\Db(k(x))}(k(x),j_x^*F[l-d])\cong H^{d-l}(j_x^*F)
\end{equation}
\begin{equation}\label{2nd isom}
\Hom_{\Db(X)}(F,k(x)[l])\cong\Hom_{\Db(k(x))}(j_x^*F,k(x)[l])\cong H^{-l}(j_x^*F)^{\vee}.
\end{equation}
By \cite[Lemma 3.3(a)]{thomason} the condition (1) holds if and only if $j_x^*F\cong 0$ in $\Db(k(x))$, or equivalently $H^m(j_x^*F)\cong 0$ for all $m\in \bZ$. Hence the result follows from \eqref{1st isom} and \eqref{2nd isom}.
\end{proof}
\end{lem}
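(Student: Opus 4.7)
The plan is to reduce all three conditions to the vanishing of the cohomology sheaves of $j_x^*F$, where $j_x\colon \Spec k(x)\hookto X$ denotes the closed immersion. By Thomason's characterization of support (as cited in the sketch), condition (1) is equivalent to $j_x^*F\cong 0$ in $\Db(k(x))$. Since $k(x)$ is a field, any object of $\Db(k(x))$ is quasi-isomorphic to its cohomology, i.e.\ $j_x^*F\cong\bigoplus_{m\in\bZ}H^m(j_x^*F)[-m]$, so $j_x^*F\cong 0$ is equivalent to the vanishing of every $H^m(j_x^*F)$. The whole proof will then hinge on expressing the Hom-vanishings of (2) and (3) in terms of these cohomologies.

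For (3), I would use the adjunction $j_x^*\dashv j_{x*}$ directly: writing $k(x)=j_{x*}k(x)$ abusively, we get
\[
\Hom_{\Db(X)}(F,k(x)[l])\cong\Hom_{\Db(k(x))}(j_x^*F,k(x)[l]).
\]
Plugging in the formal decomposition of $j_x^*F$ and noting that $\Hom_{\Db(k(x))}(V[-m],k(x)[l])$ is non-zero (and equal to $V^\vee$) only for $m=-l$, I get the identification with $H^{-l}(j_x^*F)^\vee$. This makes (3) equivalent to $H^m(j_x^*F)=0$ for every $m$.

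For (2), the analogous step uses the adjunction $j_{x*}\dashv j_x^!$, so I need a concrete description of $j_x^!$. Here I would invoke smoothness: since $X$ is smooth of dimension $d$ near $x$, the inclusion $j_x$ is a regular closed immersion of codimension $d$, and Grothendieck duality for regular closed immersions gives $j_x^!(-)\cong j_x^*(-)\otimes\det N_{x/X}^{\vee}[-d]$. The determinant of the conormal bundle at a closed point is a $1$-dimensional $k(x)$-vector space, so it acts trivially after tensoring, leaving $j_x^!F\cong j_x^*F[-d]$. Combined with the formal decomposition again, this yields the identification with $H^{d-l}(j_x^*F)$, so (2) is also equivalent to the vanishing of all $H^m(j_x^*F)$.

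The main (and really only) conceptual obstacle is the identification $j_x^!\cong j_x^*[-d]$; once that is in hand, the argument is pure formal manipulation using adjunctions and the formality of complexes of $k(x)$-vector spaces. Everything else—Thomason's support characterization, the formal splitting over a field, and the two standard adjunctions—is immediate.
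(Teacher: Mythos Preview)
Your proposal is correct and follows exactly the same strategy as the paper's proof: both reduce all three conditions to the vanishing of $H^m(j_x^*F)$ via the adjunctions $j_x^*\dashv j_{x*}$ and $j_{x*}\dashv j_x^!$, together with formality of $\Db(k(x))$ and Thomason's support criterion. The only difference is that you spell out the identification $j_x^!\cong j_x^*[-d]$ through Grothendieck duality for the regular closed immersion, whereas the paper uses this shift tacitly in passing from $\Hom_{\Db(X)}(k(x),F[l])$ to $\Hom_{\Db(k(x))}(k(x),j_x^*F[l-d])$.
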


\begin{lem}\label{closed points}
Notation is same as above. Let $x\in X$ and $y\in Y$ be closed points. Then $\Phi_*(\cS_X(x))=\cS_Y(y)$ if and only if $\Phi(k(x))\cong k(y)[l]$ for some $l\in \bZ$.
\begin{proof}
($\Rightarrow$) Assume that $\Phi_*(\cS_X(x))=\cS_Y(y)$. Let $y'\in Y$ be a closed point with $y'\neq y$. Then $k(y')\in \cS_Y(y)=\Phi_*(\cS_X(x))$, and so  $x\notin \Supp\Psi(k(y'))$. By Lemma \ref{supp lem},  for any  $l\in \bZ$ we have $\Hom_{\Db(X)}(k(x),\Psi(k(y'))[l])=0$. By the adjunction $\Phi\dashv \Psi$ and Lemma \ref{supp lem}, we see that $y'\notin \Supp\Phi(k(x))$. Hence we see that $\Supp\Phi(k(x))=\{y\}$. Since $\Phi$ is fully faithful, we have  \[\Hom(\Phi(k(x)),\Phi(k(x))[m])\cong \begin{cases} 
k(x)&  \mbox{ when } m=0\\
0& \mbox{ when } m<0
\end{cases}\] 
Hence by \cite[Lemma 4.5]{huybre} we obtain an isomorphism  $\Phi(k(x))\cong k(y)[l]$ for some $l\in\bZ$. \\
($\Leftarrow$) Assume that $\Phi(k(x))\cong k(y)[l]$ for some $l\in \bZ$. Then using Lemma \ref{supp lem} for $F\in \Db(Y)$ we see that 
\begin{eqnarray*}
F\in \cS_Y(y) &\Longleftrightarrow& \Hom_{\Db(Y)}(k(y), F[m])=0 \mbox{ for all } m\in\bZ\\
&\Longleftrightarrow& \Hom_{\Db(X)}(k(x), \Psi(F)[m])=0 \mbox{ for all } m\in\bZ\\
&\Longleftrightarrow& \Psi(F)\in \cS_X(x)\\
&\Longleftrightarrow& F\in \Psi^{-1}(\cS_X(x))=\Phi_*(\cS_X(x))
\end{eqnarray*}
Hence we obtain $\Phi_*(\cS_X(x))=\cS_Y(y)$.
\end{proof}
\end{lem}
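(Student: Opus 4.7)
The plan is to prove the two implications separately. The \emph{if} direction is a direct chase through the adjunction $\Phi\dashv\Psi$ and Lemma \ref{supp lem}, while the \emph{only if} direction requires first pinning down the support of $\Phi(k(x))$ and then promoting that support information to an actual identification of $\Phi(k(x))$.

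For the \emph{if} direction, assuming $\Phi(k(x))\cong k(y)[l]$, I would translate membership in $\cS_Y(y)$ step by step. By Lemma \ref{supp lem}, $F\in\cS_Y(y)$ is equivalent to $\Hom_{\Db(Y)}(k(y),F[m])=0$ for all $m\in\bZ$; the hypothesis converts this (up to a uniform shift) to $\Hom_{\Db(Y)}(\Phi(k(x)),F[m+l])=0$ for all $m$; and the adjunction turns this into $\Hom_{\Db(X)}(k(x),\Psi(F)[m+l])=0$ for all $m$, which by Lemma \ref{supp lem} again means $\Psi(F)\in\cS_X(x)$, i.e., $F\in\Psi^{-1}(\cS_X(x))=\Phi_*(\cS_X(x))$.

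For the \emph{only if} direction, assume $\Phi_*(\cS_X(x))=\cS_Y(y)$. I would first determine the support of $\Phi(k(x))$ by picking an arbitrary closed point $y'\ne y$ of $Y$: the skyscraper $k(y')$ lies in $\cS_Y(y)$, so $\Psi(k(y'))\in\cS_X(x)$, and running the chain of equivalences above in reverse (Lemma \ref{supp lem}, adjunction, Lemma \ref{supp lem}) yields $y'\notin\Supp\Phi(k(x))$. Full faithfulness of $\Phi$ guarantees $\Phi(k(x))\ne 0$, so its support is a non-empty closed subset of $Y$ that contains no closed point other than $y$, which forces $\Supp\Phi(k(x))=\{y\}$.

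The remaining and most delicate step is to upgrade this support statement to the isomorphism $\Phi(k(x))\cong k(y)[l]$. Full faithfulness supplies $\End(\Phi(k(x)))\cong\End(k(x))\cong k(x)$ together with $\Hom(\Phi(k(x)),\Phi(k(x))[m])=0$ for $m<0$, and I would then invoke a structural criterion such as \cite[Lemma 4.5]{huybre}, which identifies any object on a smooth variety with zero-dimensional support at a closed point, field-valued endomorphism ring, and vanishing negative self-Exts as a shifted skyscraper. The main obstacle is verifying that the hypotheses of such a criterion really apply in the (possibly non-projective) smooth setting of the lemma; if a direct citation does not cover this, I would localize at $y$ and analyze a minimal resolution there, or pass to a projective neighborhood, to force $\Phi(k(x))$ into skyscraper form and pin down the shift $l$.
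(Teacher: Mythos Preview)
Your proposal is correct and follows essentially the same route as the paper's proof: both directions use Lemma~\ref{supp lem} together with the adjunction $\Phi\dashv\Psi$ in exactly the way you describe, and the \emph{only if} direction is finished by pinning down $\Supp\Phi(k(x))=\{y\}$, reading off the self-Hom conditions from full faithfulness, and invoking \cite[Lemma~4.5]{huybre}. Your extra caveat about the applicability of that lemma in the general smooth (not necessarily projective) setting is a reasonable bit of care that the paper does not spell out, but otherwise the arguments coincide.
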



\begin{lem}\label{lem:FM and supp}
Let $X$ and $Y$ be smooth  varieties such  that $\dim X>0$. Let  
$\Phi\colon \Db(X) \xrightarrow{\sim} \Db(Y)$ be an exact equivalence.
Assume that for any closed point $x \in X$, $\Supp (\Phi(k(x)))=Y$ holds. Then we have
\[\Phi(\Spec_{\otimes}\Db(X))\cap \Spec_{\otimes}\Db(Y)=\emptyset.\]
\end{lem}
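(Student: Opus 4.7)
The plan is to argue by contradiction. Suppose there exist points $x \in X$ and $y \in Y$ with $\Phi(\cS_X(x)) = \cS_Y(y)$; by Proposition~\ref{prop Balmer}, every element of $\Spec_\otimes\Db(X)$ and of $\Spec_\otimes\Db(Y)$ has this form, so excluding such an equality is exactly what is required. The key idea is to test the proposed equality against residue field sheaves $k(x_0)$ at carefully chosen closed points $x_0 \in X$, where the support hypothesis becomes directly applicable.

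Since $\dim X > 0$ and $X$ is a smooth variety over the algebraically closed base field $k$, $X$ contains infinitely many closed points; in particular one can choose a closed point $x_0 \in X$ distinct from $x$. For this $x_0$ the support $\Supp k(x_0) = \{x_0\}$ does not contain $x$, so $k(x_0) \in \cS_X(x)$ directly from the definition of $\cS_X(x)$. Applying the equivalence $\Phi$ to this membership gives $\Phi(k(x_0)) \in \Phi(\cS_X(x)) = \cS_Y(y)$, which by definition of $\cS_Y(y)$ means $y \notin \Supp \Phi(k(x_0))$.

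On the other hand, the hypothesis of the lemma applied to the closed point $x_0$ gives $\Supp \Phi(k(x_0)) = Y$, hence $y \in \Supp \Phi(k(x_0))$, contradicting the previous sentence. This contradiction will complete the proof.

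I do not foresee any substantive obstacle: once Proposition~\ref{prop Balmer} is used to reduce the statement to showing $\Phi(\cS_X(x)) \ne \cS_Y(y)$ for every pair $(x,y)$, the argument is a one-line test on a single residue field sheaf. The only point that requires minor care is the existence of a closed point $x_0 \ne x$; this is guaranteed by $\dim X > 0$ together with the standing convention that varieties are of finite type over an algebraically closed field, which ensures that $X$ has infinitely many closed points.
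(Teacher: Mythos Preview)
Your argument is correct and follows essentially the same route as the paper: both reduce via Proposition~\ref{prop Balmer} to showing $\Phi(\cS_X(x))\neq\cS_Y(y)$ for all $x,y$, then test against $k(x')$ for a closed point $x'\neq x$ and use the full-support hypothesis. The paper phrases the last step by invoking Remark~\ref{rem:supp} rather than unwinding the definition of $\cS_Y(y)$, and it argues directly rather than by contradiction, but these are cosmetic differences.
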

\begin{proof}
Take a point $x \in X$.  
Since $\dim X>0$, there is a closed point $x' \in X$ such that $x'\neq x$. Then we have $k(x') \in \cS_X(x)$, and so $\Phi(k(x')) \in \Phi(\cS_X(x))$. For any point $y \in Y$, by Remark \ref{rem:supp} and the assumption, we have $\Phi(k(x')) \notin \cS_Y(y)$.  This implies that  $\Phi(\cS_X(x)) \neq \cS_Y(y)$. 
Therefore, $\Phi(\Spec_{\otimes}\Db(X))\cap \Spec_{\otimes}\Db(Y)=\emptyset$ holds.
\end{proof}

\begin{exa}\label{abelian variety}
Let $A$ be an abelian variety over $\bC$. Denote the dual abelian variety of $A$ by $A^\vee$.
Let $\cU$ be a Poincar\'e line bundle over $A^\vee \times A$. 
By \cite[Theorem 2.2]{muk}, we have an equivalence $\Phi_{\cU}\colon \Db(A^\vee) \xrightarrow{\sim} \Db(A)$.
For a closed point $[L] \in A^\vee$, we have $\Phi_{\cU}(k([L]))=L$. 
By Proposition \ref{prop Balmer}, Theorem \ref{matsui thm} and Lemma \ref{lem:FM and supp}, there is an immersion
\[A \sqcup A^\vee \xrightarrow{\sim} \bspec\Db(A) \sqcup \,\Phi_{\cU}\left(\bspec\Db(A^\vee)\right) \hookrightarrow \spec\Db(A).\]
 \end{exa}

\begin{exa}\label{K3}
Let $X$ be a complex K3 surface and $H$ an ample divisor on $X$. 
Take $v=(r,c,d) \in H^*(X,\bZ)$ such that $c \in \mathrm{NS}(X)$. Let $M_H(v)$ be the moduli space of $H$-semistable sheaves with Mukai vector $v$.  
Assume that $r\geq 0$ and $\gcd(r,cH,d)=1$. Then $M_H(v)$ is the fine moduli space by \cite[Proposition 10.20]{huybre}. Let $\cU$ be a universal family of the moduli space $M_H(v)$. Then we have an exact functor
$\Phi_{\cU}\colon \Db(M_H(v)) \to \Db(X)$ such that $\Phi_{\cU}(k([E]))=E$ for any closed point $[E] \in M_H(v)$.
Assume that $v$ is an isotropic vector. Then $M_H(v)$ is also a K3 surface and $\Phi_{\cU}\colon \Db(M_H(v)) \to \Db(X)$ is an equivalence by \cite[Proposition 10.25]{huybre}. By Lemma \ref{lem:FM and supp}, if $r>0$ holds, there is an immersion 
\begin{align*}
    X \sqcup M_H(v) &\xrightarrow{\sim} \bspec\Db(X)\, \sqcup \,\Phi_{\cU}\left(\bspec\Db(M_H(v))\right)\\
    &\hookrightarrow \spec\Db(X).
\end{align*}
\end{exa}

\begin{exa}\label{CY3}
Let $V \defeq \bC^7$ be a $7$-dimensional vector space.
Consider the Pl\"ucker embedding $G(2,V) \hookrightarrow \bP(\wedge^2V)$ and
the Pfaffian variety $\mathrm{Pf}\defeq \{[A] \in \bP(\wedge^2 V^*) \mid \mathrm{rk}(A) \leq 4\}$. 
By \cite[Theorem 0.3]{bc}, we can take a $7$-dimensional linear subspace $L \subset \wedge^2V^*$ such that 
$X\defeq G(2,V) \cap \bP(L^\perp)$ and $Y\defeq \mathrm{Pf} \cap \bP(L)$ are Calabi-Yau $3$-folds.
Then  by \cite[Theorem 6.2]{bc} there is an equivalence $\Phi\colon \Db(Y) \xrightarrow{\sim} \Db(X)$  such that $\Phi(k(y))=I_{C_y}$ holds for any closed point $y \in Y$, where  $I_{C_y}$ is the ideal sheaf of some curve $C_y$ in $X$. By Lemma \ref{lem:FM and supp}, there is an immersion
\[X \sqcup Y \xrightarrow{\sim} \bspec\Db(X) \sqcup \Phi\left(\bspec\Db(Y)\right) \hookrightarrow \spec\Db(X).\]
\end{exa}

In contrast to Lemma \ref{lem:FM and supp}, we will see that some fully faithful functors arising from birational geometry induces  non-empty intersections of Balmer spectrums of smooth varieties. 

Let $X$ and $Y$ be smooth varieties, and let $U_X\subseteq X$ and $U_Y\subseteq Y$ are open subvarieties such that there exists an isomorphism $\varphi\colon U_Y\simto U_X$. Denote by  $\pi_{U_X}\colon\Db(X)\to\Db(U_X)$ and $\pi_{U_Y}\colon \Db(Y)\to\Db(U_Y)$  the restriction functors.

\begin{lem}\label{non-empty}
 Let $\Phi\colon\Db(X)\hookto \Db(Y)$ be a fully faithful  functor that admits a right adjoint functor $\Psi\colon \Db(Y)\to \Db(X)$. Assume that the following diagram commutes 
\[
\begin{tikzcd}
 \Db(Y)\arrow[rr,"\Psi"]\arrow[d,"\pi_{U_Y}"']&&\Db(X)\arrow[d,"\pi_{U_X}"]\\
 \Db(U_Y)\arrow[rr,"\varphi_*(-)\otimes L"]&&\Db(U_X)
 \end{tikzcd}
\]
for some $L\in \Pic U_X$. Then we have 
\[
\pi_{U_Y}^{-1}(\bspec\Db(U_Y))\subseteq \Phi_*(\bspec\Db(X))\cap \bspec\Db(Y).
\]

\begin{proof}
By Lemma \ref{open commute}, we have an inclusion $\pi_{U_Y}^{-1}(\bspec\Db(U_Y))\subset \bspec\Db(Y)$. For  $x\in U_X$ and $F\in \Db(Y)$, we have
\begin{eqnarray*}
F\in \pi_{U_Y}^{-1}(\cS_{U_Y}(\varphi(x)))&\Longleftrightarrow& F_{\varphi(x)}\cong 0\\
&\Longleftrightarrow& (\varphi_*(F|_{U_Y})\otimes L)_x\cong 0\\
&\Longleftrightarrow& \Psi(F)_x\cong 0\\
&\Longleftrightarrow& F\in \Phi_*(\cS_X(x)).
\end{eqnarray*}
This shows the equality
$
\pi_{U_Y}^{-1}(\cS_{U_Y}(\varphi(x)))=\Phi_*(\cS_X(x)).
$
Hence we obtain the other inclusion $\pi_{U_Y}^{-1}(\bspec\Db(U_Y))\subset \Phi_*(\bspec\Db(X))$.
\end{proof}
\end{lem}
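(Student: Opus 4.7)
The plan is to prove the stronger pointwise equality
\[
\pi_{U_Y}^{-1}(\cS_{U_Y}(y')) \;=\; \Phi_*(\cS_X(\varphi(y')))
\]
for every point $y' \in U_Y$, from which the claimed set-theoretic inclusion follows at once. By Balmer's reconstruction (Proposition \ref{prop Balmer}), every element of $\bspec\Db(U_Y)$ is of the form $\cS_{U_Y}(y')$, so such an equality exhibits the left-hand side as an element of $\Phi_*(\bspec\Db(X))$. Simultaneously, Lemma \ref{open commute} identifies $\pi_{U_Y}^{-1}(\cS_{U_Y}(y'))$ with $\cS_Y(y')$, which lies in $\bspec\Db(Y)$, giving the other half of the intersection for free.

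To verify the pointwise equality, I would test membership of an arbitrary $F \in \Db(Y)$ against both sides. The left side is the condition $(F|_{U_Y})_{y'} \cong 0$, which since $y' \in U_Y$ is the same as $F_{y'} \cong 0$. The right side, by the definition $\Phi_* = \Psi^{-1}$, is the condition $\Psi(F)_{\varphi(y')} \cong 0$, equivalently $(\Psi(F)|_{U_X})_{\varphi(y')} \cong 0$ because $\varphi(y') \in U_X$. Applying the hypothesized commutative square rewrites this stalk as $(\varphi_*(F|_{U_Y}) \otimes L)_{\varphi(y')}$. Because $L$ is a line bundle its stalk at $\varphi(y')$ is invertible and drops out, and because $\varphi$ is an isomorphism the remaining stalk $(\varphi_*(F|_{U_Y}))_{\varphi(y')}$ vanishes exactly when $(F|_{U_Y})_{y'} \cong 0$. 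This matches the left-hand side and closes the chain of equivalences.

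There is no deep obstacle: the proof is a diagram chase combining the definition of $\Phi_*$, Lemma \ref{open commute}, and the given commutative square. The only step that deserves a moment of care is the final transfer of a stalk vanishing across $\varphi$ together with the twist by $L$; once one notes that $\varphi_*$ is exact on quasi-coherent sheaves and that $L_{\varphi(y')}$ is invertible, this is routine, and the argument goes through verbatim whether $y'$ is closed or not.
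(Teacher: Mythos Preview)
Your proposal is correct and follows essentially the same route as the paper's proof: both establish the pointwise identity $\pi_{U_Y}^{-1}(\cS_{U_Y}(y'))=\Phi_*(\cS_X(\varphi(y')))$ via the same chain of stalk-vanishing equivalences using the commutative square, and both invoke Lemma~\ref{open commute} for the inclusion into $\bspec\Db(Y)$. The only cosmetic difference is that the paper parametrizes by $x\in U_X$ whereas you parametrize by $y'=\varphi^{-1}(x)\in U_Y$.
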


\begin{exa}[Blow-up]\label{blow up}
Let $X$ be a smooth variety, and let us denote by $f\colon \widetilde{X}\to X$ the blow-up of $X$ along a smooth subvariety $Y\subset X$ of codimension greater than one. Then the functor 
\[
f^*\colon \Db(X)\to \Db(\widetilde{X})
\]
is fully faithful, and it admits a right adjoint functor
 $f_*\colon \Db(\widetilde{X})\to \Db(X)$. If we set  $U\defeq X\backslash Y$ and $\widetilde{U}\defeq \pi^{-1}(U)$, the restriction $f|_{\widetilde{U}}\colon \widetilde{U}\to U$ of $f$ is an isomorphism, and  we have an isomorphism $
 \pi_{U}\circ f_*\cong (f|_{\widetilde{U}})_*\circ \pi_{\widetilde{U}}
 $
of functors. Then by Lemma \ref{non-empty}, we have 
\[
\pi_{\widetilde{U}}^{-1}(\bspec\Db(\widetilde{U}))\subseteq (f^*)_*(\bspec \Db(X))\cap \bspec\Db(\widetilde{X}). 
\]
If $y\in Y$ is a closed point, by Lemma \ref{closed points} we see that  $(f^*)_*(\cS_X(y))\notin \bspec\Db(\widetilde{X})$.
Hence we see that \[(f^*)_*(\bspec \Db(X))\not\subseteq \bspec\Db(\widetilde{X}).\]
\end{exa}

\begin{exa}[Standard flip/flop]\label{standard flip} Let $X$ and $Y$ be a smooth varieties, and $X\xleftarrow{q}W\xrightarrow{p}Y$ a standard flip (or flop) in the sense of  \cite[Section 11.3]{huybre}. Then $q$ and $p$ are the blow-ups of $X$ and $Y$ along some smooth subvarieties $Z_X\subseteq X$ and $Z_{Y}\subseteq Y$ respectively, which are isomorphic to the projective spaces, and we have $q^{-1}(U_X)=p^{-1}(U_Y)$, where $U_X\defeq X\backslash Z_X$ and $U_Y\defeq Y\backslash Z_Y$. Moreover, the functor 
\[
\Phi\defeq p_*\circ q^*\colon \Db(X)\to \Db(Y)
\]
is fully faithful \cite[Theorem 3.6]{bo}, and it admits a right adjoint functor $\Psi\defeq q_*\circ p^!\colon \Db(Y)\to \Db(X)$. Write $q_U\colon q^{-1}(U_X)\to U_X$ and $p_U\colon p^{-1}(U_Y)\to U_Y$ for the isomorphisms defined as the restrictions of $q$ and $p$, and define $\varphi\defeq q_U\circ (p_U)^{-1}\colon U_Y\simto U_X$. Then we have an isomorphism 
$
\pi_{U_X}\circ \Psi\cong \varphi_*\circ \pi_{U_Y}
$
of functors, where $\pi_{U_X}$ and $\pi_{U_Y}$ are the restrictions. Hence by Lemma \ref{non-empty}, we have 
\[
\pi_{U_Y}^{-1}(\bspec\Db(U_Y))\subseteq \Phi_*(\bspec\Db(X))\cap\bspec\Db(Y).
\]
If $z\in Z_X$ is a closed point, by Lemma \ref{closed points} we see that  $(\Phi)_*(\cS_X(z))\notin \bspec\Db(Y)$.
Hence we have 
\[
(\Phi)_*(\bspec \Db(X))\not\subseteq \bspec\Db(Y).
\]
\end{exa}

The following shows that  the subspace $\bspec \Db(X)\subset \spec \Db(X)$ is not closed in general. 
\begin{rem}
Let $X$ and $Y$ be same as in Example \ref{standard flip}. Then $\bspec\Db(X)$ is not closed in $\spec\Db(X)$.
\begin{proof} 
 If the subspace $\bspec \Db(X)\subseteq\spec\Db(X)$ is closed, then the intersection $\Phi_*(\bspec\Db(X))\cap\bspec\Db(Y)$ is closed in $\bspec\Db(Y)$. Hence  we have 
 \begin{equation}\label{remark 3.10}
 \pi_{U_Y}^{-1}(\bspec\Db(U_Y))=\Phi_*(\bspec\Db(X))\cap\bspec\Db(Y)
 \end{equation}
 since  $(\Phi)_*(\cS_X(z))\notin \bspec\Db(Y)$ for any closed point $z\in Z_X$ and \[\left(\Phi_*(\bspec\Db(X))\cap\bspec\Db(Y)\right)\backslash \pi_{U_Y}^{-1}(\bspec\Db(U_Y))\] is closed in $\bspec\Db(Y)$.
 But \eqref{remark 3.10} shows that $\pi_{U_Y}^{-1}(\bspec\Db(U_Y))$ is open and closed in $\bspec\Db(Y)$, which is a  contradiction.
\end{proof}
\end{rem}

\subsection{Maximal thick subcategories}

In this section, we  provide some examples of maximal thick subcategories.

\begin{dfn}
Let $\cT$ be a triangulated category. 
\begin{itemize}
\item[(1)] A thick subcategory $\cM$ of $\cT$ is said to be {\it maximal} if $\cM\neq \cT$ and there is no thick subcategory $\cN$  such that $\cM\subsetneq \cN\subsetneq \cT$.
\item[(2)] A thick subcategory $\cU$ of $\cT$ is said to be {\it proper} if $\cU\neq 0, \cT$.
\item[(3)] A triangulated category is said to be {\it simple} if it has no proper thick subcategories. 
\end{itemize}
\end{dfn}

By definition maximal thick subcategories are automatically prime thick subcategories.  We denote by \[\Max(\cT)\subseteq\spec\cT\] the set of maximal thick subcategories of $\cT$. 

\begin{exa}\label{example:simple}
The bounded derived category $\Db(K)$ of a filed $K$ is simple. 
\end{exa}

\begin{rem}\label{rem:max simple}
 Note that $\cM\in \Th(\cT)$ is maximal if and only if the quotient $\cT/\cM$ is simple by \cite[Lemma 3.1]{takahashi2}.  
\end{rem}

\begin{prop}\label{orthogonal prime}
Let $E\in \cT$ be an exceptional object. Then the left orthogonal complement
\[
^{\perp}E\defeq\{A\in \cT\mid \Hom(A,E[n])=0 \mbox{ for all } n\in \bZ\}
\]
is a maximal thick subcategory of $\cT$. Similarly, the right orthogonal complement $E^{\perp}$ is also a maximal thick subcategory of $\cT$.
\begin{proof}
The quotient $\cT/^{\perp}E$ is equivalent to $\l E\r\cong \Db(k)$. Since $\Db(k)$ is simple, the left orthogonal $^{\perp}E$ is maximal by Remark \ref{rem:max simple}.
\end{proof}
\end{prop}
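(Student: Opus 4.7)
The plan is to invoke Remark \ref{rem:max simple}, which reduces the maximality of $^{\perp}E$ to the statement that the Verdier quotient $\cT/{^{\perp}E}$ is a simple triangulated category. Since $\Db(k)$ is simple by Example \ref{example:simple}, and since $E$ being exceptional means the triangulated subcategory $\langle E \rangle \subseteq \cT$ generated by $E$ is equivalent to $\Db(k)$, the whole proof comes down to producing a triangulated equivalence
\[
\cT/{^{\perp}E} \simto \langle E \rangle.
\]

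To establish this equivalence, I would show that $\bigl({^{\perp}E},\, \langle E \rangle\bigr)$ is a semiorthogonal decomposition of $\cT$. The semiorthogonality relation $\Hom_{\cT}({^{\perp}E},\, \langle E \rangle[n]) = 0$ for all $n$ is immediate from the definition of ${^{\perp}E}$. For the generation, given $A \in \cT$, I would form the evaluation morphism
\[
\varepsilon_A \colon \bigoplus_{n \in \bZ} \Hom_{\cT}(E, A[n]) \otimes_k E[-n] \longrightarrow A
\]
and complete it to a distinguished triangle $P(A) \to A \to Q(A) \to P(A)[1]$ with $P(A) \in \langle E \rangle$. Applying $\Hom_{\cT}(E, -)$ to this triangle and using $\Hom^{*}_{\cT}(E, E) = k$ in degree zero shows that $\varepsilon_A$ induces an isomorphism on $\Hom_{\cT}(E, -[n])$, hence $Q(A) \in {^{\perp}E}$. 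The general theory of semiorthogonal decompositions then furnishes the desired equivalence $\cT/{^{\perp}E} \simto \langle E \rangle$, completing the argument for the left orthogonal via Remark \ref{rem:max simple}.

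The right orthogonal case is handled symmetrically, with the dual semiorthogonal decomposition $\cT = \langle \langle E \rangle,\, E^{\perp} \rangle$ produced by the coevaluation morphism $A \to \bigoplus_n \Hom_{\cT}(A, E[n])^{\vee} \otimes_k E[n]$. The main subtlety lies in the well-definedness of the (co)evaluation map, which requires the graded vector space $\Hom^{*}_{\cT}(E, A)$ to be finite-dimensional; this is implicit in the working conventions of the paper (the triangulated categories of interest, such as $\Db(X)$ for smooth projective $X$, are Hom-finite), and is the only point where one must be cautious when specializing the argument to a given $\cT$.
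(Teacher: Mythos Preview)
Your overall strategy is precisely the paper's: reduce to Remark \ref{rem:max simple} by identifying the Verdier quotient with $\langle E\rangle\cong\Db(k)$, which is simple by Example \ref{example:simple}. The paper simply asserts the equivalence $\cT/{^{\perp}E}\simeq\langle E\rangle$ without further comment; you supply the standard semiorthogonal-decomposition argument behind it.

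There is, however, a swap of the two orthogonals in your explicit construction. From the triangle $P(A)\to A\to Q(A)$ with $P(A)=\bigoplus_n\Hom_{\cT}(E,A[n])\otimes_k E[-n]$, applying $\Hom_{\cT}(E,-)$ yields $\Hom_{\cT}(E,Q(A)[n])=0$ for all $n$; this places $Q(A)$ in $E^{\perp}$, \emph{not} in ${^{\perp}E}$. Thus the evaluation map produces the decomposition with factors $\langle E\rangle$ and $E^{\perp}$, giving $\cT/E^{\perp}\simto\langle E\rangle$ and settling the \emph{right} orthogonal case. Symmetrically, it is your coevaluation map that produces a triangle $B\to A\to C$ with $C\in\langle E\rangle$ and, after applying $\Hom_{\cT}(-,E)$, with $B\in{^{\perp}E}$; this gives $\cT/{^{\perp}E}\simto\langle E\rangle$ and handles the \emph{left} orthogonal case. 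Once this swap is corrected, your argument is complete and amounts to a fleshed-out version of the paper's one-line proof.
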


The following shows that for any integral noetherian scheme $X$, $\Perf X$ always has a maximal thick subcategory.

\begin{prop}
Let $X$ be an integral noetherian scheme, and $\xi\in X$ the generic point of $X$. Then $\cS_X(\xi)$ is a maximal thick subcategory of $\Perf X$.
\begin{proof}
Let $\xi\in U$ be an affine open subset, and set $Z\defeq X\backslash U$. By definition we have $\Perf_ZX\subseteq \cS_X(\xi)$. 
 By \eqref{eq:up to direct} and \[\Th(\Perf X/\Perf_ZX)=\{\cU\in \Th(\Perf X)\mid \Perf_ZX\subseteq \cU\}\] we have an order-preserving bijection
\[
\Omega\colon \Th(\Perf U)\simto \{\cU\in \Th(\Perf X)\mid \Perf_ZX\subseteq \cU\},
\]
and  we have $\Omega(\cS_U(\xi))=\cS_X(\xi)$. Hence it is enough to show that $\cS_U(\xi)$ is maximal in $\Perf U$. Since $\Perf U/\cS_U(\xi)\cong \Perf(\cO_{U,\xi})$ by \cite[Lemma 3.2 (1)]{matsui} and $\cO_{U,\xi}$ is the function field $K(X)$ of $X$, the thick subcategory $\cS_U(\xi)\subseteq \Perf U$ is maximal by Example \ref{example:simple} and Remark \ref{rem:max simple}.
\end{proof}
\end{prop}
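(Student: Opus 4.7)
The plan is to reduce to the affine setting and then to a residue field computation, exploiting the hypothesis that $X$ is integral so that $\cO_{X,\xi}$ is the function field $K(X)$.

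First, I would choose an affine open neighborhood $U$ of the generic point $\xi$ and set $Z \defeq X \backslash U$. Every object of $\Perf_Z X$ has support contained in $Z$, hence disjoint from $\xi$, giving $\Perf_Z X \subseteq \cS_X(\xi)$. Combining the homeomorphism \eqref{eq:up to direct} with the general fact (implicit in Proposition \ref{verdier}) that thick subcategories of the Verdier quotient $\Perf X/\Perf_Z X$ correspond bijectively and order-preservingly to thick subcategories of $\Perf X$ containing $\Perf_Z X$, one obtains an order-preserving bijection between $\Th(\Perf U)$ and $\{\cW \in \Th(\Perf X) \mid \Perf_Z X \subseteq \cW\}$. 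Using Lemma \ref{open commute} pointwise on closed points (or on all of $U$) shows that under this bijection $\cS_U(\xi)$ maps to $\cS_X(\xi)$.

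Since maximality is preserved by order-preserving bijections, it then suffices to prove that $\cS_U(\xi)$ is maximal in $\Perf U$. Here $U = \Spec A$ is affine, and the localization of perfect complexes at the generic point yields an equivalence $\Perf U/\cS_U(\xi) \simto \Perf(\cO_{U,\xi})$, which is available via \cite[Lemma 3.2 (1)]{matsui}. Because $X$ is integral, $\cO_{U,\xi} = K(X)$ is a field, so $\Perf K(X) = \Db(K(X))$ is simple by Example \ref{example:simple}. Applying Remark \ref{rem:max simple} — a thick subcategory is maximal exactly when its Verdier quotient is simple — we conclude that $\cS_U(\xi)$ is maximal in $\Perf U$, and transporting along the bijection from the previous step gives that $\cS_X(\xi)$ is maximal in $\Perf X$.

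The only step requiring any real care is the compatibility $\Omega(\cS_U(\xi)) = \cS_X(\xi)$ under the bijection; but this is essentially the identity $F_x \cong (F|_U)_x$ for $x \in U$, already encoded in Lemma \ref{open commute}. Everything else is routine once the machinery in Section 2 is in hand, and the conceptual core of the argument is simply that the integrality of $X$ forces the ``local ring at $\xi$'' to be a field, which is the obstruction to the quotient having further proper thick subcategories.
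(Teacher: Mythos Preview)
Your proposal is correct and follows essentially the same route as the paper: reduce to an affine open $U$ via the order-preserving bijection $\Th(\Perf U)\cong\{\cW\in\Th(\Perf X)\mid \Perf_ZX\subseteq\cW\}$, identify $\cS_U(\xi)$ with $\cS_X(\xi)$ under this bijection, and then use \cite[Lemma 3.2 (1)]{matsui} together with Remark~\ref{rem:max simple} and Example~\ref{example:simple} to conclude from $\Perf U/\cS_U(\xi)\cong\Db(K(X))$. Your invocation of Lemma~\ref{open commute} at $x=\xi$ for the compatibility $\Omega(\cS_U(\xi))=\cS_X(\xi)$ is exactly what the paper leaves implicit.
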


We propose the following naive question.

\begin{ques}\label{ques:1}
Let $X$ be a smooth variety. For any $\cP\in \spec(\Db(X))$, is there a maximal thick subcategory $\cM\in \Max(\Db(X))$ such that $\cP\subseteq\cM$?
\end{ques}

By \cite[Proposition 2.3]{mt}, for any $\cP\in \spec\Db(X)$ we have 
\[
\overline{\{\cP\}}=\{\cQ\in\spec\Db(X)\mid \cQ\subseteq\cP\}.
\]
 This implies that Question \ref{ques:1} is equivalent to the following question.
 
\begin{ques}
Let $X$ be a smooth variety. Does the following equality hold?
\[
\spec\Db(X)=\bigcup_{\cM\in\Max(\Db(X))}\overline{\{\cM\}}.
\]
\end{ques}

\begin{rem}
 If $X$ is a smooth quasi-affine variety or a smooth projective curve of genus $g\leq 1$, the answers to the above questions are affirmative by Theorem \ref{matsui thm}, \cite[Example 4.10]{matsui} and Theorem \ref{thm:elliptic curve} in the next section. \footnote{After this paper was accepted for the publication, we note that the answers to the above questions are affirmative. We will discuss this problem in a future work.}
\end{rem} 


\section{Prime thick subcategories on elliptic curves}

\subsection{Derived categories of curves}

Throughout this subsection, $X$ denotes a smooth projective curve. The following propositions are well-known.
\begin{prop}\label{prop:torsion decomposition}
For a coherent sheaf $E$ on $X$, there are a torsion sheaf $T$ on $X$ and a locally free sheaf $F$ on $X$ such that 
$E \cong T \oplus F$.
\begin{proof}
Let $T\subseteq E$ be the torsion subsheaf of $E$, and then $F\defeq E/T$ is  torsion free. Since $X$ is a smooth curve,  the torsion free sheaf $F$ is automatically a locally free sheaf. Now we have an exact sequence
\[
0\to T\to E\to F\to0.
\]
It is enough to show that this sequence splits. By the Serre duality, we have
\[
\Ext^1(F,T)\cong \Hom(T,F\otimes \omega_X).
\]
But $\Hom(T,F\otimes \omega_X)=0$, since the image of a morphism $f\colon T\to F\otimes \omega_X$ is also torsion and $F\otimes \omega_X$ is torsion free. 
\end{proof}
\end{prop}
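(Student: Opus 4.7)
The plan is to produce the decomposition as the splitting of the tautological short exact sequence associated with the torsion subsheaf.

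First I would let $T \subseteq E$ denote the torsion subsheaf of $E$ and set $F \defeq E/T$. By construction $F$ is torsion-free. Since $X$ is a smooth curve, the local rings $\cO_{X,x}$ at closed points are discrete valuation rings, and a finitely generated torsion-free module over a DVR is free; hence $F$ is locally free. This produces the short exact sequence
\[
0 \to T \to E \to F \to 0,
\]
and the whole proposition reduces to showing that this sequence splits.

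Next I would argue that $\Ext^{1}(F,T)=0$, which yields the splitting. I see two clean routes. The direct route is to use that $F$ is locally free, so $\Ext^{1}(F,T) \cong H^{1}(X, F^{\vee} \otimes T)$; since $T$ is a torsion sheaf on a curve its support is a finite set of closed points, the same holds for $F^{\vee}\otimes T$, and sheaves supported in dimension zero have vanishing higher cohomology. The alternative (and probably the one I would write, since it is most natural in the context of the paper) is Serre duality:
\[
\Ext^{1}(F,T) \;\cong\; \Hom(T, F\otimes \omega_X)^{\vee}.
\]
Then $\Hom(T, F\otimes \omega_X)=0$ because any morphism from a torsion sheaf to the torsion-free sheaf $F\otimes \omega_X$ is zero (its image is simultaneously torsion and torsion-free, hence trivial).

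With $\Ext^{1}(F,T)=0$ the extension splits, giving $E\cong T\oplus F$ as desired. The potentially delicate step is the vanishing of $\Hom(T, F\otimes \omega_X)$, but it is immediate once one observes that $F\otimes \omega_X$ remains locally free. No real obstacle is expected; the only point that requires smoothness is the identification \emph{torsion-free $=$ locally free}, which fails on singular or higher-dimensional bases.
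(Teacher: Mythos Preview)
Your proposal is correct and essentially identical to the paper's own proof: both take the torsion subsheaf $T$, observe $F=E/T$ is locally free on a smooth curve, and then split the sequence by showing $\Ext^1(F,T)=0$ via Serre duality and the vanishing of $\Hom(T,F\otimes\omega_X)$. The only addition is your alternative cohomological route $\Ext^1(F,T)\cong H^1(X,F^\vee\otimes T)=0$, which is a nice aside but not the argument the paper uses.
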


\begin{prop}[{\cite[Corollary 3.15]{huybre}}]\label{prop:hom dim one}
For any object $E \in \Db(X)$, there are coherent sheaves $E_1, \cdots, E_n$ on $X$ and integers $l_1, \cdots, l_n \in \bZ$ such that $E \cong \bigoplus_{i=1}^{n}E_i[l_i]$.
\end{prop}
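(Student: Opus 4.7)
The plan is to proceed by induction on $n$, the number of non-zero cohomology sheaves of $E$, exploiting the fact that a smooth projective curve $X$ has global cohomological dimension one, i.e.\ $\Ext^k_{\cO_X}(F,G) = 0$ for all coherent sheaves $F,G$ and all $k \geq 2$ (this follows because all local rings $\cO_{X,x}$ are DVRs, and also can be seen from Serre duality on $X$).

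The base cases $n=0,1$ are trivial: when $n=0$, $E\cong 0$, and when $n=1$, $E$ has a unique non-zero cohomology sheaf in some degree $i$, so $E\cong \cH^i(E)[-i]$. For the inductive step, suppose $E$ has $n\geq 2$ non-zero cohomology sheaves, and let $a$ be the smallest integer with $\cH^a(E)\neq 0$. The canonical truncation triangle reads
\[
\cH^a(E)[-a] \longrightarrow E \longrightarrow \tau_{>a} E \xrightarrow{\;\delta\;} \cH^a(E)[-a+1],
\]
and since $\tau_{>a} E$ has exactly $n-1$ non-zero cohomology sheaves, the inductive hypothesis gives a splitting
\[
\tau_{>a} E \;\cong\; \bigoplus_{j\geq a+1} \cH^j(E)[-j].
\]

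It remains to show that the connecting morphism $\delta$ vanishes, for then the triangle splits and yields $E \cong \cH^a(E)[-a]\oplus\tau_{>a}E \cong \bigoplus_{j} \cH^j(E)[-j]$. Substituting the splitting of $\tau_{>a}E$ into $\Hom(\tau_{>a}E,\cH^a(E)[-a+1])$, this group decomposes as
\[
\bigoplus_{j\geq a+1} \Hom_{\Db(X)}\!\bigl(\cH^j(E)[-j],\,\cH^a(E)[-a+1]\bigr) \;=\; \bigoplus_{j\geq a+1}\Ext^{\,j-a+1}\!\bigl(\cH^j(E),\cH^a(E)\bigr).
\]
Since $j\geq a+1$, the exponent satisfies $j-a+1\geq 2$, and each such $\Ext$-group vanishes by $\gldim\coh(X)=1$. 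Hence $\delta=0$ and the triangle splits.

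The main (and really only) subtle point is justifying the vanishing $\Ext^{\geq 2}=0$ on $X$; this is the ingredient that makes the argument genuinely one-dimensional. Everything else is a standard truncation-and-split induction, and since the statement is quoted from \cite[Corollary 3.15]{huybre}, the writeup can be very brief.
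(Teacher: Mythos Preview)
The paper does not give its own proof of this proposition; it simply cites \cite[Corollary 3.15]{huybre}. Your argument is correct and is essentially the standard proof found in that reference: induct on the amplitude, peel off the lowest cohomology sheaf via the canonical truncation triangle, and use $\Ext^{\geq 2}=0$ on the hereditary category $\coh(X)$ to kill the connecting morphism so the triangle splits.
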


We recall some results in \cite{el}, which we need later. For this, we recall the definition of torsion/torsion-free subcategories.

\begin{dfn}[{\cite[Definition 4.3]{el}}]\label{def:torsion}
A proper thick subcategory $\cC$ of $\Db(X)$ is {\it torsion} (resp. {\it torsion free}) if any object $E \in \cC\backslash \{0\}$ is isomorphic to the direct sums of shifts of torsion sheaves (resp. torsion free sheaves).
\end{dfn}

\begin{prop}[{\cite[Corollarry 4.2]{el}}]\label{prop:Elagin-Lunts}
A proper thick subcategory 
$\cC$ of $\Db(X)$ is always torsion or torsion free.
\end{prop}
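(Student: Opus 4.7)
The plan is to prove the contrapositive: any thick subcategory $\cC \subseteq \Db(X)$ containing both a nonzero torsion sheaf $T$ and a nonzero locally free sheaf $F$ must equal $\Db(X)$. First, by Propositions \ref{prop:hom dim one} and \ref{prop:torsion decomposition}, every object of $\Db(X)$ is a direct sum of shifts of coherent sheaves of the form $T' \oplus F'$ with $T'$ torsion and $F'$ locally free. Since thick subcategories are closed under direct summands, both the torsion and locally free parts of every coherent sheaf in $\cC$ lie in $\cC$. Thus $\cC$ is torsion (resp.\ torsion free) precisely when it contains no nonzero locally free (resp.\ torsion) sheaf, reducing the problem to the case above.

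Given such $T, F$, I would first extract $k(x) \in \cC$ for some closed point $x$ by decomposing $T$ as a direct sum over its support, passing to a nonzero summand supported at a single closed point $x$, and using that $k(x)$ lies in the thick subcategory generated by any nonzero torsion sheaf supported at $x$ (via the cone of a suitable self-endomorphism, exploiting $\Ext^2(k(x), k(x)) = 0$ on a smooth curve to split off $k(x)$). The triangle
\[F(-x) \to F \to F \otimes k(x) \cong k(x)^r \to F(-x)[1]\]
together with its dual version $F \to F(x) \to k(x)^r \to F[1]$ then yields $F(nx) \in \cC$ for every $n \in \bZ$ by induction on $|n|$.

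The technical heart of the argument is to upgrade this to $k(y) \in \cC$ for every closed point $y \in X$. For $N$ sufficiently large, $\End(F) \otimes \cO_X(Nx)$ is globally generated, so the evaluation map $\Hom(F, F(Nx)) = H^0(\End(F)(Nx)) \to \End(F|_y) \cong \mathrm{Mat}_r(k(y))$ is surjective. In particular, there exists a nonzero $\phi \colon F \to F(Nx)$ whose fiber $\phi(y)$ is not invertible. Since $F$ is torsion-free, any such nonzero $\phi$ is injective as a morphism of sheaves, so its cokernel $\Cok(\phi)$ is a torsion sheaf whose support contains $y$. Because torsion sheaves on a smooth curve decompose as direct sums over their supports, the summand of $\Cok(\phi)$ at $y$ lies in $\cC$ by thickness, and from it one again extracts $k(y)$. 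Consequently every torsion sheaf belongs to $\cC$.

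Finally, writing $\cU \subseteq \cC$ for the thick subcategory generated by all torsion sheaves, I would invoke the restriction $j^* \colon \Db(X) \to \Db(K(X))$ to the generic point. This induces an equivalence $\Db(X)/\cU \simto \Db(K(X))$ (up to idempotent completion), and $\Db(K(X))$ admits only the trivial thick subcategories $0$ and itself, being the bounded derived category of finite-dimensional vector spaces over the field $K(X)$. Therefore $\cC/\cU$ is either $0$ or all of $\Db(K(X))$; the former forces $\cC = \cU$, contradicting $F \in \cC \setminus \cU$, so by the standard bijection between thick subcategories of $\Db(X)$ containing $\cU$ and thick subcategories of the Verdier quotient, we conclude $\cC = \Db(X)$. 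The main obstacle is the construction of the degenerate morphism $\phi$ in the third paragraph: it crucially requires global generation of $\End(F)(Nx)$ for $N \gg 0$, together with the observation that torsion-freeness of $F$ forces any nonzero $\phi$ to be injective at the level of sheaves.
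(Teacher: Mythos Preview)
The paper does not prove this proposition; it merely cites \cite[Corollary 4.2]{el}. Your overall strategy is sound and Steps 1, 2, 3, and 5 go through, but Step 4 contains a genuine error. The assertion that ``torsion-freeness of $F$ forces any nonzero $\phi\colon F\to F(Nx)$ to be injective'' is false once $\rk F\ge 2$: for $F=\cO_X^{\oplus 2}$ and any nonzero section $s$ of $\cO_X(Nx)$, the map $\left(\begin{smallmatrix}s&0\\0&0\end{smallmatrix}\right)$ is nonzero with kernel $\cO_X$. Controlling $\phi$ only at the single fibre $y$ cannot rule this out, and if $\phi$ fails to be injective then $\Cok(\phi)$ need not be torsion, so your extraction of $k(y)$ breaks down.

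The repair is in fact simpler than your global-generation manoeuvre. For $N$ sufficiently large, Riemann--Roch gives $h^0(\cO_X(Nx-y))>0$, so there is a nonzero $s\in H^0(\cO_X(Nx))$ vanishing at $y$. Then $\phi\defeq s\cdot\id_F\colon F\to F(Nx)$ is injective (tensor the injection $\cO_X\hookrightarrow\cO_X(Nx)$ by the locally free sheaf $F$), and $\Cok(\phi)\cong F\otimes\cO_Z$, with $Z$ the vanishing scheme of $s$, is a torsion sheaf whose support contains $y$. This object lies in $\cC$, and from it you extract $k(y)$ exactly as in Step 2; the remainder of your argument then goes through unchanged.
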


The following is an easy observation.

\begin{lem}\label{lem:torsion is ideal}
Let $\cC$ be a torsion thick subcategory of $\Db(X)$. Then $\cC$ is an ideal of $(\Db(X), \otimes_X)$.
\begin{proof}
Let $L\in\Pic(X)$ be a very ample line bundle. By \cite[Theorem 4]{orlov}, the smallest triangulated subcategory of $\Db(X)$ containing $ \cO_X\oplus L$ is $\Db(X)$ itself. Thus it is enough to show that $L\otimes \cC\subseteq\cC$. By Proposition \ref{prop:hom dim one}, we only need  to prove that for any indecomposable torsion coherent sheaf $F\in\cC\cap \coh X$ we have $L\otimes F\in \cC$.   If we denote by $Z$ the scheme theoretic support  of $F$,  there is a unique coherent sheaf $F_0$ on $Z$ such that $F\cong j_*(F_0)$, where $j\colon Z\hookto X$ is the natural closed immersion. Since $F$ is an indecomposable torsion sheaf, its (set theoretic) support $\Supp(F)$ consists of a unique point $x$, and so $Z\cong \Spec R$ for some local ring $R$ of Krull dimension zero. Hence the Picard group $\Pic(Z)$ is trivial, and in particular we have $j^*L\cong \cO_Z$. Now $L\otimes F\in \cC$ follows from  the   isomorphisms
$
L\otimes F\cong L\otimes j_*(F_0)\cong j_*(j^*(L)\otimes F_0)\cong F\in \cC.
$
\end{proof}
\end{lem}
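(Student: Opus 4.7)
The plan is to introduce the auxiliary full subcategory
$$\cA\defeq\{A\in\Db(X)\mid A\otimes_X E\in\cC \text{ for all } E\in\cC\}.$$
Since $\otimes_X$ is exact in each variable and $\cC$ is thick, $\cA$ is a thick subcategory of $\Db(X)$; it obviously contains $\cO_X$. Hence the assertion $\cA=\Db(X)$ reduces, via a suitable generation statement, to a single tensor check. The natural such generator on a smooth projective curve is a very ample line bundle $L$: by Orlov's generation theorem ($\Db(X)$ is generated as a thick subcategory by $\cO_X\oplus L$) it suffices to prove $L\otimes_X E\in\cC$ for every $E\in\cC$.

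To verify this, I would first peel off the homological information using Proposition \ref{prop:hom dim one} and the closure of $\cC$ under shifts and direct summands, reducing to the case where $E$ is an indecomposable coherent sheaf; since $\cC$ is torsion, $E$ is then an indecomposable torsion sheaf. On a smooth curve, a torsion sheaf decomposes along its set-theoretic support (morphisms and $\Ext^1$'s between torsion sheaves with disjoint supports vanish), so indecomposability forces $\Supp(E)=\{x\}$ for a single closed point. Let $j\colon Z\hookto X$ denote the scheme-theoretic support with $Z\cong\Spec R$ for an Artinian local ring $R$, and write $E\cong j_*E_0$.

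Now the key computation: because $R$ is local Artinian, $\Pic(Z)$ is trivial, so $j^*L\cong\cO_Z$. The projection formula then gives
$$L\otimes_X E\;\cong\; L\otimes_X j_*E_0\;\cong\; j_*\!\left(j^*L\otimes_Z E_0\right)\;\cong\; j_*E_0\;\cong\; E\in\cC,$$
which finishes the reduction and hence the lemma.

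The step I expect to be the main obstacle is the reduction to testing against a single generator $L$; once that is in place, the projection formula combined with the triviality of $\Pic$ on a fat point does the work essentially for free. An alternative route would be to tensor directly with an arbitrary $A\in\coh X$ after splitting $A$ into its torsion and locally free parts via Proposition \ref{prop:torsion decomposition}, but that seems to require more care in handling the locally free part globally, whereas the Orlov-generation approach isolates the only nontrivial case and handles it uniformly.
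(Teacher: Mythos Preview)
Your proof is correct and follows essentially the same route as the paper's: reduce via Orlov's generation theorem to checking that tensoring by a single very ample line bundle $L$ preserves $\cC$, then reduce via Proposition~\ref{prop:hom dim one} to indecomposable torsion sheaves, and finish using the projection formula together with the triviality of $\Pic$ on an Artinian local scheme. The only difference is that you make the auxiliary thick subcategory $\cA$ explicit, whereas the paper leaves this reduction implicit; the substance of the two arguments is identical.
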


The following results are  immediate consequences of Lemma \ref{lem:torsion is ideal}.

\begin{cor}\label{cor:torsion}
Let $\cP \in \spec \Db(X)$ be a prime thick subcategory. 
Then $\cP$ is torsion if and only if $\cP \in \bspec \Db(X)$ holds.
\begin{proof}
If $\cP$ is torsion, $\cP$ is a prime ideal of $(\Db(X), \otimes_X)$ by Theorem \ref{matsui thm} and Lemma \ref{lem:torsion is ideal}. If $\cP \in \bspec \Db(X)$ holds, $\cP$ is torsion by Remark \ref{rem:supp}. 
\end{proof}
\end{cor}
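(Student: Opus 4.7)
The plan is to deduce each direction from the results already assembled in this section.

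For the forward implication, assume $\cP$ is torsion. Lemma \ref{lem:torsion is ideal} gives that $\cP$ is an ideal of $(\Db(X),\otimes_X)$. Since $X$ is a smooth projective curve we have $\Perf X=\Db(X)$, so Theorem \ref{matsui thm}(1) identifies a prime thick subcategory that happens to be an ideal with a prime ideal, and hence $\cP\in\bspec\Db(X)$.

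For the converse, assume $\cP\in\bspec\Db(X)$. By Remark \ref{rem:supp} the prime ideal $\cP$ contains no nonzero locally free sheaf, equivalently no nonzero torsion-free coherent sheaf, since on a smooth curve every torsion-free coherent sheaf is locally free (cf.\ the proof of Proposition \ref{prop:torsion decomposition}). The Elagin--Lunts dichotomy of Proposition \ref{prop:Elagin-Lunts} tells us that $\cP$ is either torsion or torsion-free. The torsion-free option is impossible: by Definition \ref{def:torsion} it would produce a nonzero object of $\cP$ expressible as a direct sum of shifts of torsion-free sheaves, and since $\cP$ is closed under direct summands and shifts this would put a nonzero torsion-free (hence locally free) sheaf into $\cP$, contradicting the previous sentence. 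Therefore $\cP$ is torsion.

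I do not expect a genuine obstacle here; the substantive content is already packaged in Lemma \ref{lem:torsion is ideal} and in Proposition \ref{prop:Elagin-Lunts}. The only minor bookkeeping is to verify that $\cP$ is proper in the sense required by Proposition \ref{prop:Elagin-Lunts}: $\cP\neq\Db(X)$ holds by the definition of a prime ideal, while $\cP\neq 0$ holds because on a curve of positive dimension the zero subcategory cannot be prime, for instance $k(x)\otimes_X k(y)\cong 0$ for any two distinct closed points $x\neq y$.
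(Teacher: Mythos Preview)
Your argument is correct. The forward implication is identical to the paper's. For the converse the paper simply writes ``$\cP$ is torsion by Remark \ref{rem:supp}'', which implicitly means: by Remark \ref{rem:supp} no nonzero locally free sheaf lies in $\cP$, and then Propositions \ref{prop:torsion decomposition} and \ref{prop:hom dim one} decompose any object of $\cP$ into shifts of torsion and locally free summands, so every nonzero object of $\cP$ is a direct sum of shifts of torsion sheaves. You instead route the converse through the Elagin--Lunts dichotomy (Proposition \ref{prop:Elagin-Lunts}) to exclude the torsion-free case; this is a small detour but entirely valid, and your extra check that $\cP\neq 0$ (needed to invoke Proposition \ref{prop:Elagin-Lunts}) is a nice point the paper leaves implicit.
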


\begin{cor}
The subspace $\bspec\Db(X)$ is open and closed in $\spec\Db(X)$. 
\begin{proof}
By Proposition \ref{closure}, the closure of $\bspec\Db(X)$ is $\Z(\cE)$, where $\cE=\{E\in\Db(X)\mid \Supp E=X\}$. If $\cP\in \Z(\cE)$, then $\cP$ must be torsion, and so $\cP$  lies in $\bspec\Db(X)$ by Lemma \ref{lem:torsion is ideal}. This proves that $\bspec\Db(X)$ is closed in $\spec\Db(X)$. 

Again by Lemma \ref{lem:torsion is ideal}, if we set $\cF\defeq\{F \in \Db(X)\mid \Supp F\neq X, F \neq 0\}$, we have  $\spec\Db(X)\backslash\bspec\Db(X)\subseteq \Z(\cF)$. The opposite inclusion $\Z(\cF)\subseteq\spec\Db(X)\backslash\bspec\Db(X) $ follows from Remark \ref{rem:supp}. Hence we have  \[\spec\Db(X)\backslash\bspec\Db(X)=\Z(\cF),\] which shows that $\bspec\Db(X)$ is open in $\spec\Db(X)$.
\end{proof}
\end{cor}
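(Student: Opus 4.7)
The plan is to treat closedness and openness separately, in each case exhibiting the subspace in question as a $\Z(-)$-set for a suitable family of objects, and then using the Elagin--Lunts dichotomy (Proposition \ref{prop:Elagin-Lunts}) to rule out unwanted primes.

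\textbf{Closedness.} Proposition \ref{closure} already identifies the closure of $\bspec\Db(X)$ inside $\spec\Db(X)$ with $\Z(\cE)$, where $\cE\defeq\{E\in\Db(X)\mid \Supp E=X\}$. Thus it suffices to verify the reverse inclusion $\Z(\cE)\cap\spec\Db(X)\subseteq \bspec\Db(X)$. Given such a prime $\cP$, Proposition \ref{prop:Elagin-Lunts} forces $\cP$ to be either torsion or torsion-free. The torsion-free case would mean every nonzero object of $\cP$ is a direct sum of shifts of locally free sheaves; since $X$ is an irreducible smooth curve, any such nonzero sheaf has support equal to all of $X$, which contradicts $\cP\in\Z(\cE)$. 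Hence $\cP$ must be torsion, and then Lemma \ref{lem:torsion is ideal} makes $\cP$ an ideal of $(\Db(X),\otimes_X)$, so Theorem \ref{matsui thm}(1) upgrades it to a prime ideal and gives $\cP\in\bspec\Db(X)$.

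\textbf{Openness.} For the complement I would take the dual family $\cF\defeq\{F\in\Db(X)\mid F\neq 0,\ \Supp F\neq X\}$ and show $\spec\Db(X)\setminus\bspec\Db(X)=\Z(\cF)$, which exhibits $\bspec\Db(X)$ as the open set $\spec\Db(X)\setminus\Z(\cF)$. For the inclusion $\subseteq$: if $\cP\in\spec\Db(X)$ is not in $\bspec\Db(X)$ then by Corollary \ref{cor:torsion} it fails to be torsion, so Proposition \ref{prop:Elagin-Lunts} makes it torsion-free; every nonzero object of $\cP$ is then a sum of shifts of locally free sheaves, all of full support, so $\cP\cap\cF=\emptyset$. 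For $\supseteq$: if $\cP\in\Z(\cF)\cap\bspec\Db(X)$, then Remark \ref{rem:supp} bars $\cP$ from containing any object of full support, while $\cP\in\Z(\cF)$ bars it from containing nonzero objects of proper support; so $\cP=0$, contradicting the fact that the zero subcategory is not prime in $\Db(X)$ on a curve (two skyscrapers at distinct closed points tensor to zero without either being zero).

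\textbf{Main obstacle.} The substantive input is the torsion/torsion-free dichotomy of Proposition \ref{prop:Elagin-Lunts} together with the observation that nonzero locally free sheaves on the irreducible curve $X$ have support equal to $X$. Once these are in place, matching ``torsion versus torsion-free'' with ``Balmer prime versus not Balmer prime''---already packaged in Lemma \ref{lem:torsion is ideal} and Corollary \ref{cor:torsion}---reduces the statement to a short bookkeeping of which family $\cE$ or $\cF$ each prime avoids.
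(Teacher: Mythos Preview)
Your proof is correct and follows essentially the same route as the paper's: exhibit $\bspec\Db(X)$ and its complement as the closed sets $\Z(\cE)$ and $\Z(\cF)$ respectively, using the torsion/torsion-free dichotomy together with Lemma \ref{lem:torsion is ideal}, Corollary \ref{cor:torsion}, and Remark \ref{rem:supp}. You spell out the appeal to Proposition \ref{prop:Elagin-Lunts} and the exclusion of $\cP=0$ more explicitly than the paper does (and note that your parenthetical justification for the latter is really an argument that $0$ is not a prime \emph{ideal}, which is fine in context since you are assuming $\cP\in\bspec\Db(X)$), but the substance is identical.
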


\subsection{Moduli spaces of stable sheaves}

In this subsection, following \cite{hp}, we recall basic facts about moduli spaces of semistable sheaves on elliptic curves.

Let $X$ be an elliptic curve.  
 The following is standard.

\begin{prop}[{\cite[Lemma 1]{hp}}]\label{prop:stable}
Let $E$ be a locally free sheaf on $X$. Consider the following conditions.
\begin{itemize}
    \item[$(1)$] The sheaf $E$ is $\mu$-stable. 
    \item[$(2)$] The sheaf $E$ is indecomposable.
    \item[$(3)$] The sheaf $E$ is $\mu$-semistable.
\end{itemize}
Then $(1) \Rightarrow (2)$ and $(2) \Rightarrow (3)$ hold.
If $\mathrm{rk}(E)$ and $\deg(E)$ are coprime, then $(3) \Rightarrow (1)$ holds.
\end{prop}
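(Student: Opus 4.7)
The plan is to verify each implication using standard facts about the slope $\mu(E) = \deg(E)/\operatorname{rk}(E)$ together with the special features of elliptic curves, namely $\omega_X \cong \cO_X$.

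For $(1)\Rightarrow(2)$, I would argue by contradiction. Suppose $E$ is $\mu$-stable but decomposes as $E \cong E_1 \oplus E_2$ with both $E_i$ nonzero. Since each $E_i$ is a proper subsheaf of positive rank, $\mu$-stability of $E$ yields $\mu(E_1) < \mu(E)$ and $\mu(E_2) < \mu(E)$. But $\mu(E)$ is the weighted average of $\mu(E_1)$ and $\mu(E_2)$, which cannot lie strictly above both. This is the contradiction.

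For $(2)\Rightarrow(3)$, I would use the Harder–Narasimhan filtration. If $E$ is not semistable, let $F \subset E$ be its maximal destabilizing subsheaf, with quotient $Q = E/F$; both $F$ and $Q$ are semistable and $\mu(F) > \mu(Q)$. The key point is that on an elliptic curve, Serre duality combined with $\omega_X \cong \cO_X$ gives
\[
\operatorname{Ext}^1(Q, F) \cong \operatorname{Hom}(F, Q)^{\vee}.
\]
Since $F$ and $Q$ are semistable with $\mu(F) > \mu(Q)$, the standard fact $\operatorname{Hom}(F, Q) = 0$ applies (any nonzero map between semistable sheaves respects the slope inequality $\mu(F) \le \mu(Q)$ on its image). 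Hence $\operatorname{Ext}^1(Q, F) = 0$, so the defining short exact sequence of the HN filtration splits, contradicting indecomposability.

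For the final implication $(3)\Rightarrow(1)$ under $\gcd(\operatorname{rk}(E), \deg(E)) = 1$, suppose $E$ is semistable but not stable. Then there is a proper subsheaf $F \subset E$ with $0 < \operatorname{rk}(F) < \operatorname{rk}(E)$ and $\mu(F) = \mu(E)$. Writing $\deg(E)/\operatorname{rk}(E)$ in lowest terms (which is already its reduced form by the coprimality hypothesis), the equality $\deg(F)/\operatorname{rk}(F) = \deg(E)/\operatorname{rk}(E)$ forces $\operatorname{rk}(E)$ to divide $\operatorname{rk}(F)$, contradicting $\operatorname{rk}(F) < \operatorname{rk}(E)$.

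None of the steps should be a serious obstacle: the first and third are elementary arithmetic of the slope, and the only substantive input is the well-known vanishing $\operatorname{Hom}(F, Q) = 0$ for semistable $F, Q$ with $\mu(F) > \mu(Q)$, which I would quote. The one spot to be a little careful is making sure that the Harder–Narasimhan quotient $Q$ is torsion-free (hence locally free on the curve), but this is automatic since $F$ is the maximal destabilizing subsheaf in $E$ and $E$ is locally free.
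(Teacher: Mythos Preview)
The paper does not actually prove this proposition; it simply cites \cite[Lemma 1]{hp} and moves on. Your argument is correct and is essentially the standard proof one finds in the cited reference: $(1)\Rightarrow(2)$ and $(3)\Rightarrow(1)$ are slope arithmetic valid over any curve, while $(2)\Rightarrow(3)$ uses the Harder--Narasimhan filtration together with Serre duality and $\omega_X\cong\cO_X$ to force the extension to split.
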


We define the set $I\defeq \{(r,d) \in \bZ^2 \mid r>0, \gcd(r,d)=1  \}$. 
The set $I$ parametrizes the fine moduli space of $\mu$-semistable sheaves on $X$.

\begin{prop}[{\cite[Lemma 2, Remark, Proposition 3]{hp}}]\label{prop:fine moduli}
Take $(r,d) \in \bZ^2$ such that $r$ is positive.
Let $M(r,d)$ be the moduli space of $\mu$-semistable sheaves with chern character $(r,d)$. Denote the $\mu$-stable locus of $M(r,d)$ by $M^{\mathrm{st}}(r,d)$. Then $M(r,d)=M^{\mathrm{st}}(r,d)\neq \emptyset$ holds if and only if $(r,d) \in I$ holds. 
Moreover, if $(r,d) \in I$ holds, $M(r,d)$ is the fine moduli space.
\end{prop}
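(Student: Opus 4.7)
The plan is to address the two assertions separately, using Atiyah's classification of vector bundles on elliptic curves together with the Fourier--Mukai transform associated to the Poincar\'e bundle on $X \times X$.

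For the equality $M(r,d) = M^{\mathrm{st}}(r,d)$ and non-emptiness when $(r,d) \in I$: Proposition \ref{prop:stable} ($(3) \Rightarrow (1)$) directly gives the equality once $\gcd(r,d) = 1$. Non-emptiness is then obtained from Atiyah's classification, which furnishes an indecomposable locally free sheaf of rank $r$ and degree $d$; such a sheaf is automatically $\mu$-stable by Proposition \ref{prop:stable}.

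For the converse, assume $\gcd(r,d) = h > 1$ and write $(r,d) = h(r_0,d_0)$. The direct sum $F^{\oplus h}$ of any $F \in M(r_0,d_0)$ already exhibits an object in $M(r,d) \setminus M^{\mathrm{st}}(r,d)$, so the conjunction fails once I know $M(r,d) \neq \emptyset$. To actually verify $M^{\mathrm{st}}(r,d) = \emptyset$, I would invoke the $\SL_2(\bZ)$-action on Chern characters induced by autoequivalences of $\Db(X)$ (twists by line bundles and the Fourier--Mukai transform associated to the Poincar\'e bundle); these functors preserve $\mu$-stability, and the $\SL_2(\bZ)$-action is transitive on vectors of fixed gcd, so it suffices to rule out $\mu$-stable bundles of rank $h \geq 2$ and degree $0$. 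But any indecomposable bundle of rank $h$, degree $0$ on $X$ is $F_h \otimes L$ for Atiyah's distinguished indecomposable $F_h$ and some degree-$0$ line bundle $L$, and $F_h$ admits a nonzero section; hence $E$ admits an injection $L \hookrightarrow E$ of the same slope, which destabilizes $E$.

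For the fineness assertion when $(r,d) \in I$, the stable sheaves parametrized by $M(r,d)$ have automorphism group $k^{\ast}$, so the obstruction to the existence of a universal family on $X \times M(r,d)$ is a Brauer class $\alpha \in \mathrm{Br}(M(r,d))$. The plan is to show $\alpha = 0$: choosing $a,b \in \bZ$ with $ar + bd = 1$, one can combine a local universal family $\cU$ with an appropriate twist by a tautological line bundle of relative degree $1$ to produce a globally defined universal sheaf (the standard trick guaranteeing fineness whenever the numerical invariants have gcd one). Alternatively, using Atiyah's identification $M(r,d) \cong X$, one constructs the universal family explicitly by applying a suitable Fourier--Mukai transform to the Poincar\'e line bundle on $X \times X$ and twisting by line bundles to adjust the Chern character.

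The main technical obstacle is fineness: the emptiness/equality statements reduce to routine applications of Atiyah's classification combined with $\SL_2(\bZ)$-equivariance, but producing a global universal family requires either a careful Brauer-class argument using the coprimality of $(r,d)$, or an explicit Fourier--Mukai construction exploiting the self-duality of the elliptic curve.
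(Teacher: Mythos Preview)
The paper does not give a proof of this proposition at all: it is stated with a citation to \cite{hp} (Lemma~2, the subsequent Remark, and Proposition~3 there) and then used as a black box. So there is no argument in the paper to compare your outline against.

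Your sketch is essentially correct and follows the standard route (which is also, in substance, what Hein--Ploog do): Atiyah's classification supplies existence, Proposition~\ref{prop:stable} handles the coprime case, and the $\SL_2(\bZ)$-action via twists and the Poincar\'e Fourier--Mukai transform reduces the non-coprime case to slope zero, where Atiyah's $F_h$ is visibly not stable. One small wording issue: the injection $L\hookrightarrow E$ of equal slope shows $E$ is not $\mu$-\emph{stable} (it remains $\mu$-semistable), so ``destabilizes'' is slightly off. For fineness, either of your two approaches works; the coprimality trick producing a global universal family from the relation $ar+bd=1$ is the cleanest and is exactly the mechanism behind the standard criterion you invoke.
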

Take $(r,d) \in I$. By Proposition \ref{prop:fine moduli},
there is  a universal family $\cU_{r,d}$ of the moduli space $M(r,d)$.  
If $\cU'_{r,d}$ is another universal family of the moduli space $M(r,d)$ there is an invertible sheaf $\cL$ on $M(r,d)$ such that $\cU'_{r,d} \cong \cU_{r,d} \otimes p_{M(r,d)}^*\cL$, where $p_{M(r,d)} \colon M(r,d) \times X \to M(r,d)$ is the projection.
The Fourier-Mukai functor $\Phi_{\cU_{r,d}}\colon \Db(M(r,d)) \to \Db(X)$ gives an equivalence and $\Phi_{\cU_{r,d}}(k([E]))=E$ holds for any closed point
$[E] \in M(r,d)$.

\begin{prop}[{\cite[Proposition 4]{hp}}]\label{prop:FM transform}
Let $E$ be a $\mu$-semistable locally free sheaf on $X$.
Consider 
\[(r,d) \defeq \frac{1}{\gcd{(\mathrm{rk}(E),\deg(E))}}(\mathrm{rk}(E), \deg(E)) \in I.\]
Then the object $\Phi^{-1}_{\cU_{r,d}}(E)$ is a torsion sheaf on $M(r,d)$.
\end{prop}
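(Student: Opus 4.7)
The strategy is to reduce the statement to the stable case via Jordan--H\"older filtration, and then inductively transport the torsion property through $\Phi^{-1}_{\cU_{r,d}}$ using exact triangles in $\Db(M(r,d))$.

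First I would set $n\defeq\gcd(\mathrm{rk}(E),\deg(E))$ so that $(\mathrm{rk}(E),\deg(E))=(nr,nd)$, and observe that the slope of $E$ equals $d/r$. Since $E$ is $\mu$-semistable with this slope, it admits a Jordan--H\"older filtration
\[
0=E_0\subset E_1\subset\cdots\subset E_N=E
\]
whose graded pieces $F_i\defeq E_i/E_{i-1}$ are $\mu$-stable sheaves of slope $d/r$. The key point in this step is to confirm that each $F_i$ has class exactly $(r,d)$, so that $[F_i]\in M(r,d)$. This follows from the classical fact (essentially Atiyah's classification of vector bundles on elliptic curves) that a $\mu$-stable bundle on $X$ necessarily has coprime rank and degree: if $F_i$ has slope $d/r$ then its class must be $(mr,md)$ for some $m\geq 1$, and stability forces $\gcd(mr,md)=m\cdot\gcd(r,d)=m=1$.

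Next I would apply $\Phi^{-1}_{\cU_{r,d}}$ to the filtration. Because $\Phi_{\cU_{r,d}}$ is an equivalence satisfying $\Phi_{\cU_{r,d}}(k([F]))=F$ for any $[F]\in M(r,d)$, one has $\Phi^{-1}_{\cU_{r,d}}(F_i)\cong k([F_i])$, a skyscraper sheaf on $M(r,d)$. The short exact sequences $0\to E_{i-1}\to E_i\to F_i\to 0$ yield exact triangles
\[
\Phi^{-1}_{\cU_{r,d}}(E_{i-1})\to\Phi^{-1}_{\cU_{r,d}}(E_i)\to k([F_i])\to\Phi^{-1}_{\cU_{r,d}}(E_{i-1})[1]
\]
in $\Db(M(r,d))$. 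I then induct on $i$: assuming $\Phi^{-1}_{\cU_{r,d}}(E_{i-1})$ is a torsion sheaf concentrated in degree $0$, the long exact sequence of cohomology sheaves attached to this triangle, together with the vanishing of $H^j$ of both $k([F_i])$ and $\Phi^{-1}_{\cU_{r,d}}(E_{i-1})$ for $j\neq 0$, forces $H^j(\Phi^{-1}_{\cU_{r,d}}(E_i))=0$ for all $j\neq 0$ and produces an honest short exact sequence of coherent sheaves
\[
0\to\Phi^{-1}_{\cU_{r,d}}(E_{i-1})\to\Phi^{-1}_{\cU_{r,d}}(E_i)\to k([F_i])\to 0
\]
on $M(r,d)$. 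Since both outer terms are torsion, so is $\Phi^{-1}_{\cU_{r,d}}(E_i)$; taking $i=N$ closes the induction.

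The main obstacle is the identification of the Jordan--H\"older pieces with points of $M(r,d)$; once this elliptic-curve-specific coprimality statement is in hand, the remaining induction is essentially forced by the fact that $X$ and $M(r,d)$ are one-dimensional and that $\Phi^{-1}_{\cU_{r,d}}$ is exact.
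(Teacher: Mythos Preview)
The paper does not prove this proposition: it is quoted verbatim from \cite[Proposition~4]{hp} and used as a black box, so there is no in-paper argument to compare against.

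Your proof is correct. The one ingredient that goes beyond what the paper explicitly records is the implication ``$\mu$-stable $\Rightarrow$ coprime rank and degree'' for bundles on an elliptic curve; this is classical (Atiyah) and is also contained in the ``only if'' direction of Proposition~\ref{prop:fine moduli}, since a stable sheaf of class $(r',d')$ witnesses $M^{\mathrm{st}}(r',d')\neq\emptyset$ and hence forces $\gcd(r',d')=1$. With that in hand, every Jordan--H\"older factor of $E$ is a closed point of $M(r,d)$, and your induction through the filtration---using that the outer terms of each triangle are torsion sheaves concentrated in degree $0$---goes through without issue.
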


\subsection{Matsui spectrum for elliptic curves}
In this section, we describe the Matsui spectrum of the derived category of an elliptic curve. 

Let $X$ be an elliptic curve.
Take an element $(r,d) \in I$. By Theorem \ref{matsui thm}, we have the immersion $\cS_{M(r,d)}\colon M(r,d) \hookrightarrow \spec \Db(M(r,d))$.  
Consider an immersion $i_{r,d}\colon M(r,d) \hookrightarrow \spec \Db(X)$ defined by the composition $i_{r,d}\defeq \Phi_{\cU_{r,d}} \circ \cS_{M(r,d)}$, and set  $M_{r,d}\defeq i_{r,d}(M(r,d))$. By Proposition \ref{prop Balmer}, we have $M_{r,d}=\Phi_{\cU_{r,d}}(\bspec \Db(M(r,d)))$.
Thus the image $M_{r,d}$ is independent to a choice of a universal family $\cU_{r,d}$.

The main theorem in this subsection is the following.
\begin{thm}\label{thm:elliptic curve}
We have the equality
\[\spec \Db(X)=\bspec\Db(X) \sqcup \bigsqcup_{(r,d) \in I} M_{r,d}. \]
\end{thm}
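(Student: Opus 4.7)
The plan is to combine the torsion/torsion-free dichotomy of Proposition~\ref{prop:Elagin-Lunts} with the Fourier--Mukai equivalences $\Phi_{\cU_{r,d}}$ to reduce each torsion-free prime thick subcategory of $\Db(X)$ to a torsion prime thick subcategory on $\Db(M(r,d))$, where Corollary~\ref{cor:torsion} applies. By Proposition~\ref{prop:Elagin-Lunts}, every proper thick subcategory of $\Db(X)$ is torsion or torsion-free, and Corollary~\ref{cor:torsion} identifies the torsion primes with $\bspec\Db(X)$; so it suffices to show that the torsion-free primes form precisely $\bigsqcup_{(r,d) \in I} M_{r,d}$.

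Given a torsion-free prime $\cP$, the first task is to extract an indecomposable locally free sheaf $F \in \cP$. Note $\cP \neq 0$ since $\Db(X)$ admits infinitely many minimal nonzero thick subcategories $\langle k(x) \rangle$, so the zero subcategory cannot be prime. Any nonzero object of $\cP$ decomposes by Proposition~\ref{prop:hom dim one} into shifts of coherent sheaves, each of which further decomposes via Proposition~\ref{prop:torsion decomposition} into a torsion part and a locally free part; closure under direct summands together with the torsion-free hypothesis (which by Definition~\ref{def:torsion} forbids nonzero torsion sheaves in $\cP$) leaves a nonzero locally free sheaf in $\cP$, from which I extract an indecomposable summand $F$. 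By Proposition~\ref{prop:stable}, $F$ is $\mu$-semistable; let $(r,d) \in I$ be its primitive chern character. By Proposition~\ref{prop:FM transform}, $\Phi_{\cU_{r,d}}^{-1}(F)$ is a nonzero torsion sheaf on $M(r,d)$. Hence $\Phi_{\cU_{r,d}}^{-1}(\cP) \in \spec\Db(M(r,d))$ contains a nonzero torsion sheaf, so by Proposition~\ref{prop:Elagin-Lunts} it is torsion, and by Corollary~\ref{cor:torsion} applied to $M(r,d)$ it lies in $\bspec\Db(M(r,d))$. Thus $\cP \in \Phi_{\cU_{r,d}}(\bspec\Db(M(r,d))) = M_{r,d}$.

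Disjointness rests on two observations about $\cP = \Phi_{\cU_{r,d}}(\cS_{M(r,d)}(y)) \in M_{r,d}$. First, $\cP$ contains $E_z \defeq \Phi_{\cU_{r,d}}(k(z))$ for any closed point $z \neq y$, which is locally free of rank $r \geq 1$. Second, since $\Phi_{\cU_{r,d}}$ sends $(0,1) \mapsto (r,d)$ on $K_0$ and $\cS_{M(r,d)}(y)$ is torsion (Corollary~\ref{cor:torsion}), every nonzero object in $\cP$ has chern character lying in $\bZ \cdot (r,d)$. The first observation combined with Remark~\ref{rem:supp} yields $\bspec\Db(X) \cap M_{r,d} = \emptyset$. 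The second gives $M_{r,d} \cap M_{r',d'} = \emptyset$ for $(r,d) \neq (r',d')$, since the nonzero chern character $(r,d)$ of $E_z \in \cP$ would otherwise lie in $\bZ \cdot (r',d')$, forcing $(r,d) = (r',d')$ by primitivity of both in $I$. The main obstacle will be the extraction step in the second paragraph; once this is done, the Fourier--Mukai reduction is an immediate consequence of Proposition~\ref{prop:FM transform}, and the chern character bookkeeping for disjointness is routine.
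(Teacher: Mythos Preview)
Your proposal is correct and follows essentially the same approach as the paper: the paper also splits the argument into a union lemma (using the torsion/torsion-free dichotomy, extracting an indecomposable locally free sheaf via Propositions~\ref{prop:torsion decomposition} and~\ref{prop:hom dim one}, and applying Proposition~\ref{prop:FM transform} to land in $\bspec\Db(M(r,d))$) and a disjointness lemma (using that $\Phi_{\cU_{r,d}}(k([E]))$ has full support for part~(1), and the Chern-character constraint $\{n(r,d):n\in\bZ\}$ for part~(2)). Your explicit justification that $\cP\neq 0$ and your slightly streamlined disjointness argument (using one specific $E_z$ rather than computing the full Chern-character set) are minor cosmetic differences only.
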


First, we prove the following lemma.

\begin{lem}\label{lem:union}
We have the equality
\[\spec \Db(X)=\bspec \Db(X) \cup \bigcup_{(r,d) \in I} M_{r,d}. \]
\end{lem}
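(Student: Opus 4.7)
The plan is to combine the Elagin--Lunts dichotomy (Proposition \ref{prop:Elagin-Lunts}) with the Fourier--Mukai equivalences $\Phi_{\cU_{r,d}}$ to reduce every torsion-free prime on $X$ to a torsion prime on some $M(r,d)$, which then lands in the Balmer locus by Corollary \ref{cor:torsion}.

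First, let $\cP \in \spec \Db(X)$. By Proposition \ref{prop:Elagin-Lunts}, the proper thick subcategory $\cP$ is either torsion or torsion free. If $\cP$ is torsion, Corollary \ref{cor:torsion} immediately yields $\cP \in \bspec\Db(X)$, and there is nothing more to do.

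Suppose instead that $\cP$ is torsion free. Choose any nonzero $F \in \cP$; by Definition \ref{def:torsion} together with Proposition \ref{prop:hom dim one}, $F$ is a direct sum of shifts of locally free sheaves. Taking a direct summand (using thickness) and a shift, we may pick a nonzero indecomposable locally free sheaf $E \in \cP$. By Proposition \ref{prop:stable}, $E$ is $\mu$-semistable, so we may set
\[
(r,d) \defeq \frac{1}{\gcd(\rk E,\deg E)}(\rk E,\deg E) \in I.
\]
By Proposition \ref{prop:FM transform}, $T \defeq \Phi_{\cU_{r,d}}^{-1}(E)$ is a torsion sheaf on $M(r,d)$, and it is nonzero since $\Phi_{\cU_{r,d}}$ is an equivalence. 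Now consider $\cQ \defeq \Phi_{\cU_{r,d}}^{-1}(\cP)$, which is a proper prime thick subcategory of $\Db(M(r,d))$ containing $T$. Applying the Elagin--Lunts dichotomy (Proposition \ref{prop:Elagin-Lunts}) to the smooth projective curve $M(r,d)$, the subcategory $\cQ$ is either torsion or torsion free; the latter is impossible because a nonzero torsion sheaf on a smooth curve cannot be expressed as a direct sum of shifts of torsion-free sheaves. Hence $\cQ$ is torsion, so $\cQ \in \bspec \Db(M(r,d))$ by Corollary \ref{cor:torsion}, and applying $\Phi_{\cU_{r,d}}$ gives $\cP \in \Phi_{\cU_{r,d}}(\bspec\Db(M(r,d))) = M_{r,d}$, as desired.

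I expect no serious obstacle. The one point that must be checked carefully is that $\cQ$ really cannot be torsion free, which comes down to the elementary fact that a nonzero sheaf on a smooth curve is not simultaneously torsion and locally free, together with the nonvanishing $T \ne 0$ guaranteed by the equivalence. Everything else is a direct assembly of the dichotomy, Corollary \ref{cor:torsion}, and Proposition \ref{prop:FM transform}.
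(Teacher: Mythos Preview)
Your argument is correct and follows essentially the same route as the paper's proof: apply the Elagin--Lunts dichotomy, dispose of the torsion case via the Balmer locus, and in the torsion-free case pick an indecomposable locally free summand, transport it through $\Phi_{\cU_{r,d}}^{-1}$ to a nonzero torsion sheaf, and conclude via the dichotomy on $M(r,d)$. The only cosmetic difference is that you invoke Corollary~\ref{cor:torsion} where the paper cites Lemma~\ref{lem:torsion is ideal} and Theorem~\ref{matsui thm} separately, and you spell out explicitly why $\cQ$ cannot be torsion free.
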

\begin{proof}
Take a prime thick subcategory $\cP \in \spec\Db(X)$.
By Proposition \ref{prop:Elagin-Lunts}, $\cP$ is torsion or torsion free subcategory of $\Db(X)$.
Assume that $\cP$ is torsion. By Lemma \ref{lem:torsion is ideal}, $\cP$ is an ideal.
By Theorem \ref{matsui thm}, we have $\cP \in \bspec \Db(X)$.
Assume that $\cP$ is torsion free.
By Proposition \ref{prop:torsion decomposition} and Proposition  \ref{prop:hom dim one}, we can take an object $E \in \cP$ such that $E$ is an indecomposable locally free sheaf on $X$. By Proposition \ref{prop:stable}, $E$ is $\mu$-semistable.
If we consider
\[(r,d)\defeq \frac{1}{\gcd{(\mathrm{rk}(E),\deg(E))}}(\mathrm{rk}(E), \deg(E)) \in I,\]
by Proposition \ref{prop:FM transform}, $\Phi^{-1}_{\cU_{r,d}}(E)$ is a torsion sheaf on $M(r,d)$.
Since $\Phi^{-1}_{\cU_{r,d}}(E) \in \Phi^{-1}_{\cU_{r,d}}(\cP)$, the prime thick subcategory $\Phi^{-1}_{\cU_{r,d}}(\cP)$ is torsion by Proposition \ref{prop:Elagin-Lunts}.
By Lemma \ref{lem:torsion is ideal}, we have $\Phi^{-1}_{\cU_{r,d}}(\cP) \in \bspec \Db(M(r,d))$. Therefore, we obtain $\cP \in M_{r,d}$.
\end{proof}

To prove Theorem \ref{thm:elliptic curve}, it is enough to prove the following lemma.

\begin{lem}\label{lem:disjoint}
The followings hold.
\begin{itemize}
    \item[$(1)$]For an element $(r,d) \in I$, we have $\bspec \Db(X) \cap M_{r,d} = \emptyset$.
    \item[$(2)$]For distinct elements $(r_1,d_1)\neq (r_2,d_2) \in I$, we have $M_{r_1, d_1} \cap M_{r_2, d_2}=\emptyset$.
\end{itemize}
\end{lem}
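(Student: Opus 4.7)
The plan is to split into parts (1) and (2), which share a common construction. For any $\cP \in M_{r,d}$, I write $\cP = \Phi_{\cU_{r,d}}(\cS_{M(r,d)}(x))$ for some point $x \in M(r,d)$, and pick a closed point $x' \in M(r,d)$ distinct from $x$ (possible since $M(r,d)$ is an elliptic curve, hence has infinitely many closed points). Then $k(x') \in \cS_{M(r,d)}(x)$, so the $\mu$-stable locally free sheaf $E_{x'}\defeq\Phi_{\cU_{r,d}}(k(x'))$ of rank $r$ and degree $d$ on $X$ lies in $\cP$. Part (1) follows immediately: $E_{x'}$ is locally free of positive rank on the connected variety $X$, so $\Supp E_{x'}=X$, and by Remark \ref{rem:supp} a prime ideal of $(\Db(X),\otimes_X)$ cannot contain such an object, whence $\cP \notin \bspec \Db(X)$.

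For part (2), I would argue by contradiction. Suppose $\cP \in M_{r_1,d_1}\cap M_{r_2,d_2}$ with $(r_1,d_1)\neq (r_2,d_2)$, and write $\cP = \Phi_{\cU_{r_i,d_i}}(\cS_{M(r_i,d_i)}(x_i))$ for $i=1,2$. Choosing $x_1'\neq x_1$ as above, consider the $\mu$-stable bundle $E\defeq E_{x_1'}\in \cP$ and the object $F\defeq \Phi_{\cU_{r_2,d_2}}^{-1}(E)$, which must lie in $\cS_{M(r_2,d_2)}(x_2)$. As $E$ is indecomposable (being $\mu$-stable) and $\Phi_{\cU_{r_2,d_2}}$ is an equivalence, $F$ is also indecomposable; hence by Propositions \ref{prop:hom dim one} and \ref{prop:torsion decomposition}, $F\cong F_0[l]$ for an indecomposable coherent sheaf $F_0$ on $M(r_2,d_2)$ which is either torsion or locally free.

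The locally-free case is immediate: $F_0$ then has full support on the connected variety $M(r_2,d_2)$, contradicting $F\in \cS_{M(r_2,d_2)}(x_2)$. The main obstacle is the torsion case, which I would dispatch by equating Chern characters on $M(r_2,d_2)$. Since $\Phi_{\cU_{r_2,d_2}}$ sends each point sheaf $k(y)$ to a $\mu$-stable bundle of rank $r_2$ and degree $d_2$, a length-$\ell$ torsion sheaf $F_0$ (filtered by skyscrapers) is sent to an object of rank $(-1)^l \ell r_2$ and degree $(-1)^l \ell d_2$. Matching these invariants with those of $E$, which are $(r_1,d_1)$, and using $r_1,r_2>0$ together with $\gcd(r_1,d_1)=1$, forces $\ell=1$ and $(r_1,d_1)=(r_2,d_2)$, the desired contradiction.
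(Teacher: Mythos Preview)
Your proof is correct and follows the same strategy as the paper. For (1) you unpack directly what the paper obtains by citing Lemma~\ref{lem:FM and supp}; for (2) both arguments rest on the same numerical contradiction via $(\mathrm{rk},\deg)$, the only difference being that the paper uses classical generation of $\cS_{M(r_i,d_i)}(x_i)$ by skyscrapers (together with Corollary~\ref{cor:torsion}) to identify the whole set $\{(\mathrm{rk}\,E,\deg E)\mid E\in\cP\}$ with $\bZ(r_i,d_i)$ for each $i$, whereas you track one indecomposable object through $\Phi_{\cU_{r_2,d_2}}^{-1}$, rule out the locally free case by support, and read off its class in the torsion case. This is a minor variation in execution rather than a genuinely different approach.
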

\begin{proof} 
(1) Take $(r,d) \in I$ and $\cP \in M_{r,d}$. Since $M_{r,d}=\Phi_{\cU_{r,d}}(\bspec\Db(M_{r,d}))$ and $\Supp(\Phi_{\cU_{r,d}}(k([E])))=X$ for any closed point $[E]\in M_{r,d}$, this follows from Lemma  \ref{lem:FM and supp}.

(2) Take distinct elements $(r_1,d_1)\neq (r_2,d_2) \in I$. Assume that there is an element $\cP\in M_{r_1, d_1} \cap M_{r_2, d_2}$.
By (1) and  Corollary \ref{cor:torsion}, $\cP$ is a torsion free subcategory of $\Db(X)$. For $i=1,2$,
the prime thick subcategory $\Phi^{-1}_{\cU_{r_i,d_i}}(\cP)$ lies in $\bspec\Db(M(r_i,d_i))$. 
So there exist a point $x_i \in M(r_i,d_i)$ such that $\Phi^{-1}_{\cU_{r_i,d_i}}(\cP)=\cS_{M(r_i,d_i)}(x_i)$. 
Note that $\cS_{M(r_i,d_i)}(x_i)$ is classically generated by \[
\left\{k([E])\in \cS_{M(r_i,d_i)}(x_i) \middle|  \mbox{  $[E]\in M(r_i,d_i)\backslash\{x_i\}$ is a closed point} \right\}.
\] 
Since $\Phi_{\cU_{r_i,d_i}}(k([E]))=E$ for any closed point $[E]\in M(r_i,d_i)$,   we obtain 
\[\{(\mathrm{rk}(E),\deg(E)) \in \bZ^2 \mid E \in \cP \}=\{n(r_i,d_i) \in \bZ^2 \mid n \in \bZ \}.\]
However  we have
\[\{n(r_1,d_1) \in \bZ^2 \mid n \in \bZ \} \cap \{n(r_2,d_2) \in \bZ^2 \mid n \in \bZ \}=\{(0,0)\}.\]
This is a contradiction.
\end{proof}

By Lemma \ref{lem:union} and Lemma \ref{lem:disjoint}, we have proved Theorem \ref{thm:elliptic curve}.

\begin{cor}
Let $\cP \in \spec \Db(X)$ be a prime thick subcategory.
The prime thick subcategory $\cP$ is torsion free if and only if $\cP \in M_{r,d}$ holds for some element $(r,d) \in I$.
\end{cor}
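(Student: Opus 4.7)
The plan is to combine Theorem \ref{thm:elliptic curve}, Corollary \ref{cor:torsion}, and Proposition \ref{prop:Elagin-Lunts}; all the hard work has already been done, so the corollary is essentially a repackaging.

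First I would invoke Theorem \ref{thm:elliptic curve} to decompose $\spec\Db(X)=\bspec\Db(X)\sqcup\bigsqcup_{(r,d)\in I}M_{r,d}$, so that every prime thick subcategory $\cP$ belongs to exactly one of these pieces. By Corollary \ref{cor:torsion}, $\cP\in\bspec\Db(X)$ is equivalent to $\cP$ being torsion, so it remains to show that $\cP\in M_{r,d}$ for some $(r,d)\in I$ is equivalent to $\cP$ being torsion free.

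For this I would use Proposition \ref{prop:Elagin-Lunts}: every proper thick subcategory is torsion or torsion free. Any prime $\cP$ is proper (strictly smaller than $\Db(X)$ by the definition of primality) and nonzero (since $0$ does not appear in the decomposition of Theorem \ref{thm:elliptic curve}), so the Elagin--Lunts dichotomy applies. These two alternatives are mutually exclusive for nonzero subcategories, because by Propositions \ref{prop:torsion decomposition} and \ref{prop:hom dim one} any nonzero object of $\Db(X)$ admits either a nonzero torsion summand or a nonzero locally free summand. Therefore $\cP$ is torsion free iff $\cP$ is not torsion iff $\cP\notin\bspec\Db(X)$ iff $\cP\in M_{r,d}$ for some $(r,d)\in I$, which is exactly the claim.

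There is no genuine obstacle: the only mild subtlety is confirming that Proposition \ref{prop:Elagin-Lunts} applies to $\cP$, which amounts to verifying that primes are proper and nonzero, and both of these are immediate from the definition of prime and from Theorem \ref{thm:elliptic curve}.
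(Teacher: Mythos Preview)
Your proposal is correct and follows essentially the same approach as the paper, which simply writes ``This follows from Corollary \ref{cor:torsion} and Theorem \ref{thm:elliptic curve}.'' You have merely made explicit the role of Proposition \ref{prop:Elagin-Lunts} and the mutual exclusivity of torsion and torsion-free for nonzero proper thick subcategories, as well as the check that primes are proper; these are precisely the details the paper's terse proof leaves to the reader.
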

\begin{proof}
This follows from Corollary \ref{cor:torsion} and Theorem \ref{thm:elliptic curve}.
\end{proof}

\begin{rem}
Matsui pointed out that we can describe the partially ordered set $\mathbf{Th}(\Db(X))$ by the identical argument of Theorem \ref{thm:elliptic curve}. Let $\mathbf{Th}_{\otimes}(X)$ denote the set of all ideals of $(\Db(X), \otimes_X)$.
Then we have the equality 
\[ \mathbf{Th}(\Db(X))=\mathbf{Th}_{\otimes}(X) \sqcup \bigsqcup_{(r,d) \in I} \mathbf{Th}_{\otimes}(M(r,d)) \]
as in Theorem \ref{thm:elliptic curve}.
Taking supports, we have an order preserving bijection between $\mathbf{Th}_\otimes(X)$ (resp. $\mathbf{Th}_\otimes(M(r,d))$) and the partially ordered set of all specialization closed subsets of $X$ (resp. $M(r,d)$) by \cite[Theorem 4.10]{balmer}. 
Note that all ideals of $(\Db(X), \otimes_X)$ are radical by \cite[Proposition 4.4]{balmer}.

\end{rem}


\section{Serre invariant prime thick subcategories}
In this section, we define the Serre invariant locus of Matsui spectrum, and discuss its basic properties.

Let $\cT$ be a triangulated category  with finite dimensional Hom-spaces. 
A {\it Serre functor} $\bS\colon \cT\simto \cT$ of $\cT$ is an exact autoequivalence such that for any objects $A,B\in \cT$ there is a functorial isomorphism
\[
\Hom_{\cT}(A,B)\cong \Hom_{\cT}(B,\bS(A))^*,
\]
of vector spaces, where $(-)^*$ denotes the dual of a vector space over $k$. It is standard that  Serre functors, if they exist, are unique up to functor isomorphisms, and commute with arbitrary $k$-linear equivalences (see \cite[Section 1.1]{huybre}).  In the remaining of this section, we assume that $\cT$ admits a Serre functor $\bS$. 

\begin{dfn}
A prime thick subcategory $\cP$ of $\cT$ is said to be {\it Serre invariant}, if $\bS(\cP)=\cP$.  We denote by 
\[
\Spec_{\triangle}^{\Ser}\cT
\]
the set of Serre invariant prime thick subcategories of $\cT$.
\end{dfn}

Let $X$ be a projective Gorenstein variety of dimension $d$, and  $f\colon X\to\Spec k$ the morphism defining the base. Then it is standard that the functor $\bR f_*\colon \Perf X\to\Db(k)$ admits a right adjoint functor $f^{!}\colon \Db(k)\to \Perf X$, and the complex $f^{!}(k)$ is quasi-isomorphic to a shifted invertible sheaf  $\omega_X[d]$. Furthermore $\Perf X$ admits a Serre functor $\bS_X$, and we have an isomorphism $\bS_X\cong (-)\otimes_{X}\omega_X[d]$ of functors \cite[Lemma 6.6]{ballard}. Thus any prime ideals of $(\Perf X,\otimes_X)$ are Serre invariant, and so we have an inclusion
\[
\Spec_{\otimes}\Perf X\subseteq\Spec_{\triangle}^{\Ser}\Perf X.
\]

\begin{rem}\label{calabi-yau}
 If $X$ is Calabi-Yau, i.e., $\omega_X\cong\cO_X$, then $\bS_X\cong (-)[d]$. Hence, in this case, we have 
 \[
 \Spec_{\triangle}^{\Ser}\Perf X=\spec\Perf X.
 \]
\end{rem}

The following is one of our motivation of considering Serre invariant locus of Matsui spectrum.

\begin{prop}\label{serre trivial}
Let $X$ be a Gorenstein projective variety. Assume that the smallest thick subcategory of $\Perf X$ containing the family $\{\,\omega_X^n\mid n\in \bZ\,\}$ of line bundles is $\Perf X$ itself. Then a Serre invariant prime thick subcategory of $\Perf X$ is a prime ideal of $(\Perf X,\otimes_X)$. In particular, we have a homeomorphism
\[
X\simto \Spec_{\triangle}^{\Ser}\Perf X.
\]
\begin{proof}
It is enough to show that all Serre invariant prime thick subcategories of $\Perf X$ are ideals. Let $\cP\in \sspec\Perf X$. Since $\cP$ is Serre invariant, we see that  $\cP\otimes_X\omega_X=\cP$. In particular, for any $k\in \bZ$ we have $\cP\otimes_X\omega_X^{k}=\cP$, and this shows that $\cP$ is an ideal since $\Perf X$ is generated by the family $\{\,\omega_X^n\mid n\in \bZ\,\}$. 
\end{proof}
\end{prop}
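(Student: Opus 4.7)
The plan is to establish $\sspec\Perf X = \bspec\Perf X$ by upgrading every Serre invariant prime thick subcategory to an ideal, which by Theorem \ref{matsui thm}(1) is equivalent to its being a prime ideal; the homeomorphism $X\simto\sspec\Perf X$ then follows from Proposition \ref{prop Balmer}.

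I would start from the identification $\bS_X\cong (-)\otimes_X\omega_X[d]$ for a Gorenstein projective variety $X$ of dimension $d$, recalled in the paragraph preceding the statement. If $\cP\in\sspec\Perf X$, then $\cP=\bS_X(\cP)=\cP\otimes_X\omega_X[d]$. Because $\cP$ is closed under shifts, this simplifies to $\cP\otimes_X\omega_X=\cP$, and iterating (together with the analogous identity for $\bS_X^{-1}$) yields $\cP\otimes_X\omega_X^n=\cP$ for every $n\in\bZ$.

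The key step is to pass from tensor-closure along powers of $\omega_X$ to tensor-closure against every object of $\Perf X$, which is exactly where the generation hypothesis enters. For a fixed $E\in\cP$, I would introduce the full subcategory
\[
\cU_E\defeq\{\,A\in\Perf X\mid A\otimes_X E\in\cP\,\}.
\]
Since $\otimes_X$ is exact in the first variable and $\cP$ is thick, $\cU_E$ is a thick subcategory of $\Perf X$; by the previous paragraph it contains $\omega_X^n$ for every $n\in\bZ$. The generation hypothesis then forces $\cU_E=\Perf X$, so $A\otimes_X E\in\cP$ for every $A\in\Perf X$. Running this over all $E\in\cP$ shows $\cP$ is an ideal, hence a prime ideal by Theorem \ref{matsui thm}(1).

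The reverse inclusion $\bspec\Perf X\subseteq\sspec\Perf X$ is automatic: any prime ideal is a prime thick subcategory by the same theorem, and is Serre invariant because $\bS_X$ is tensoring with an invertible object. Combining $\sspec\Perf X=\bspec\Perf X$ with Proposition \ref{prop Balmer} yields the desired homeomorphism. I do not foresee any serious obstacle here; the only nontrivial input is the generation hypothesis, which is built into the statement precisely to drive the auxiliary-subcategory argument above.
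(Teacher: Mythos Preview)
Your proposal is correct and follows essentially the same route as the paper: from $\bS_X\cong(-)\otimes_X\omega_X[d]$ deduce $\cP\otimes_X\omega_X^n=\cP$ for all $n$, then use the generation hypothesis to conclude $\cP$ is an ideal. The only difference is that you spell out the auxiliary thick subcategory $\cU_E$ explicitly, whereas the paper leaves that standard step implicit.
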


The following is an immediate consequence of Proposition \ref{serre trivial}

\begin{cor}\label{serre trivial cor}
Let $X$ be a Gorenstein projective variety. If $\omega_X$ is ample or anti-ample, we have a homeomorphism
\[
X\simto \Spec_{\triangle}^{\Ser}\Perf X.
\]
\end{cor}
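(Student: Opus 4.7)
The plan is to apply Proposition \ref{serre trivial} directly, so the only thing to verify is that when $\omega_X$ is ample or anti-ample, the family $\{\omega_X^n \mid n \in \bZ\}$ classically generates $\Perf X$ as a thick subcategory. Once this generation statement is in hand, Proposition \ref{serre trivial} gives the homeomorphism $X \simto \Spec_{\triangle}^{\Ser}\Perf X$ by combining Theorem \ref{matsui thm}(2) (which identifies $\bspec\Perf X$ with $X$) and the observation that every Serre-invariant prime thick subcategory is already an ideal.

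For the generation statement, I would use the classical fact that powers of an ample line bundle on a projective scheme generate the perfect derived category as a thick subcategory. Concretely, let $L \defeq \omega_X$ if $\omega_X$ is ample, and $L \defeq \omega_X^{-1}$ otherwise; the point is that the set $\{L^n \mid n\in\bZ\}$ equals $\{\omega_X^n \mid n\in\bZ\}$ in either case. Any perfect complex on a projective Gorenstein variety is quasi-isomorphic to a bounded complex of locally free sheaves, so by the usual d\'evissage via the stupid truncation it suffices to produce each vector bundle $F$ from $\{L^n\}$. By Serre vanishing, for $n \gg 0$ the twist $F \otimes L^n$ is globally generated, yielding a surjection $L^{-n,\oplus k}\twoheadrightarrow F$; the kernel is again a coherent sheaf to which we can iterate, and on a Noetherian scheme of finite Krull dimension the process terminates up to taking summands in $\Perf X$. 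This shows $\{L^n\} = \{\omega_X^n\}$ classically generates $\Perf X$.

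With this verified, the hypothesis of Proposition \ref{serre trivial} is satisfied, and we conclude $X \simto \Spec_{\triangle}^{\Ser}\Perf X$.

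The main obstacle, such as it is, is not conceptual but bibliographic: one needs a clean citation (or a short self-contained argument) for the assertion that powers of an ample or anti-ample line bundle generate $\Perf X$ as a thick subcategory on a projective Gorenstein variety. Since this is a standard ingredient in reconstruction-type theorems (it appears implicitly already in Bondal--Orlov), a short reference should suffice, and no further work beyond invoking Proposition \ref{serre trivial} is required.
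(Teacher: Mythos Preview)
Your proposal is correct and follows the same approach as the paper: the paper simply states that the corollary is ``an immediate consequence of Proposition \ref{serre trivial}'' and gives no further argument, so the only content is the generation statement you spell out. For a clean citation you can use \cite[Theorem 4]{orlov}, already invoked in the paper in the proof of Lemma \ref{lem:torsion is ideal}, which gives precisely that the powers of an ample line bundle generate $\Perf X$ as a thick subcategory.
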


\begin{rem}
 Let $X_1$ and $X_2$ be Gorenstein projective varieties with $\omega_{X_i}$ ample or anti-ample.  Corollary \ref{serre trivial cor} shows that an exact equivalence $\Perf X_1\cong \Perf X_2$ implies that there is a homeomorphism $X_1\simto X_2$.   If $X_1$ and $X_2$ are smooth projective varieties over $\bC$ of dimension greater than one, combination of Corollary \ref{serre trivial cor} and  \cite[Main Theorem B]{klos} shows that an exact equivalence $\Perf X_1\cong \Perf X_2$ implies that there is an isomorphism $X_1\simto X_2$. This recovers the famous Bondal--Orlov's reconstruction theorem \cite{bo}. 
\end{rem}

By Remark \ref{calabi-yau} and  Corollay \ref{serre trivial cor}, we obtain the following.

\begin{cor}
Let $C$ be a smooth projective curve of genus $g$. 
\begin{itemize}
\item[$(1)$] If $g=1$, we have an equality
\[
\sspec\Db(C)=\spec\Db(C).
\]
\item[$(2)$] If $g\neq1$, we have a homeomorphism
\[
C\simto\sspec\Db(C).
\]
\end{itemize} 
\end{cor}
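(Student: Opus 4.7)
The plan is a direct case analysis on the genus $g$, since for a smooth projective curve $C$ the canonical bundle $\omega_C$ has degree $2g-2$, and both smoothness of $C$ and the shape of $\omega_C$ are exactly what the previously proved results need as input. Note at the outset that $C$ smooth implies $C$ is Gorenstein, and smoothness of $C$ also gives $\Perf C = \Db(C)$, so the results of the preceding section about $\Perf$ apply verbatim to $\Db(C)$.

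For part (1), where $g=1$, the curve $C$ is an elliptic curve and hence Calabi--Yau, so $\omega_C \cong \cO_C$. By the formula $\bS_C \cong (-)\otimes_C\omega_C[1]$ recalled before Remark \ref{calabi-yau}, the Serre functor reduces to the shift $[1]$. Since every thick subcategory is automatically closed under shifts, the equality $\bS_C(\cP)=\cP$ holds for every prime thick subcategory $\cP$. This is precisely Remark \ref{calabi-yau}, and it gives $\sspec\Db(C)=\spec\Db(C)$ immediately.

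For part (2), where $g\neq 1$, I split further: if $g=0$ then $C\cong \bP^1$ and $\omega_C\cong \cO(-2)$ is anti-ample, while if $g\geq 2$ then $\deg\omega_C = 2g-2>0$, which on a smooth projective curve is equivalent to ampleness. In both subcases the hypothesis of Corollary \ref{serre trivial cor} is satisfied, so that corollary (whose proof relies on Proposition \ref{serre trivial}, namely that the family $\{\omega_C^n\}_{n\in\bZ}$ classically generates $\Db(C)$ when $\omega_C$ is ample or anti-ample) yields the homeomorphism $C \simto \sspec\Perf C = \sspec\Db(C)$.

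There is essentially no obstacle here: the statement is a formal consequence of Remark \ref{calabi-yau} and Corollary \ref{serre trivial cor}, and the only content is verifying that the trichotomy $g=0$, $g=1$, $g\geq 2$ maps cleanly onto the trichotomy anti-ample / trivial / ample for $\omega_C$. The mild sanity check worth mentioning is that the homeomorphism in (2) is the restriction of the Matsui immersion $\cS_C\colon C\hookto \spec\Db(C)$ of Theorem \ref{matsui thm}, so its image is manifestly contained in $\sspec\Db(C)$ (each $\cS_C(x)$ is Serre invariant because $\bS_C$ is tensor multiplication by a line bundle, which preserves support), and Corollary \ref{serre trivial cor} provides the surjectivity onto the Serre invariant locus.
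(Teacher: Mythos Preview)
Your proof is correct and follows exactly the approach the paper intends: the paper states the corollary as an immediate consequence of Remark \ref{calabi-yau} (for $g=1$) and Corollary \ref{serre trivial cor} (for $g\neq 1$), and you have simply spelled out the genus trichotomy that makes these citations apply.
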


Serre invariant locus is not equal to Matsui spectrum in general. To see this, we prove the following lemmas.

\begin{lem}\label{not ideal}
Let $X$ be a smooth projective variety of dimension $d\geq1$, and let $E\in\Db(X)$ be an exceptional object. Then the prime thick subcategories $^{\perp}E$ and $E^{\perp}$ are both not  ideals of $(\Db(X),\otimes_X)$.
\begin{proof}
If $^{\perp}E$ is an ideal, it is a prime ideal of $(\Db(X),\otimes_X)$ by Theorem \ref{matsui thm}, and by Theorem \ref{prop Balmer} there exists a point $x\in X$ such that $^{\perp}E=\cS_X(x)$. We note the following claim:\vspace{2mm}\\
{\it Claim.} \hspace{1mm}$\Supp E$ is not a point.\vspace{2mm}\\
Indeed, if there is a closed point $p\in X$ such that $\Supp E=\{
p\}$, then  for each $i\in \bZ$ the $i$-th sheaf cohomology $\cH^i(E)$ has a filtration whose factors are isomorphic to the sky scraper sheaf $k(p)$, and thus, in the Grothendieck group $K(X)$, we have $[\cH^i(E)]=n_i[k(p)]$  for some $n_i\geq0$.   If we set $a_i\defeq \sum_{i\in\bZ}(-1)^in_i$, then we have  $[E]=\sum_{i\in\bZ}(-1)^i[\cH^i(E)]=a_i[k(p)]$. In particular, by a standard isomorphism
$\Ext^i_X(k(p),k(p))\cong \gwedge T_{X,p}$ of vector spaces (see for example \cite[Examples 11.9]{huybre}), we see that  
\[
\chi(E,E)=a_i^2\chi(k(p),k(p))=a_i^2\sum_{i=0}^d(-1)^i\begin{pmatrix}d\\ i\end{pmatrix}=0\neq1,
\]
which contradicts to the assumption that $E$ is exceptional.

By the above claim, there exists a closed point $y\in \Supp E$ with $y\neq x$. Then by  Lemma \ref{supp lem} there exists an integer $m$ such that 
\[
\Hom(k(y),E[m])\neq0.
\]
This shows that $k(y)\notin  ^{\perp}\!\!E$, which contradicts to $^{\perp}E=\cS_X(x)$ and $y\neq x$. 
\end{proof}
\end{lem}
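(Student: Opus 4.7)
The plan is to argue by contradiction. Suppose $^{\perp}E$ is an ideal of $(\Db(X),\otimes_X)$. By Proposition \ref{orthogonal prime} it is already a prime thick subcategory, so Theorem \ref{matsui thm}(1) promotes it to a prime ideal, and Proposition \ref{prop Balmer} then forces $^{\perp}E=\cS_X(x)$ for a unique point $x\in X$. The goal is to produce a closed point $y\in\Supp E$ with $y\neq x$: once found, Lemma \ref{supp lem} will give $\Hom(k(y),E[m])\neq 0$ for some $m$, so $k(y)\notin{}^{\perp}E$, whereas $y$ being a closed point distinct from $x$ forces $k(y)\in\cS_X(x)$, a contradiction.

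The main content is therefore the claim that $\Supp E$ is not a single closed point, and this is the step I expect to be the main obstacle. The idea is to leverage $\chi(E,E)=\dim\Hom(E,E)=1$ together with $d=\dim X\geq 1$. If $\Supp E$ equaled a single closed point $p$, then every cohomology sheaf $\cH^i(E)$ is a successive extension of copies of $k(p)$, so in the Grothendieck group $K(X)$ we would have $[E]=a\cdot[k(p)]$ for some $a\in\bZ$. Using the standard identification $\Ext^i(k(p),k(p))\cong\bigwedge^i T_{X,p}$, one computes
\[
\chi(k(p),k(p))=\sum_{i=0}^{d}(-1)^i\binom{d}{i}=(1-1)^d=0
\]
since $d\geq 1$. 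Bilinearity of the Euler pairing would then yield $\chi(E,E)=a^2\cdot 0=0$, contradicting the exceptional condition.

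Granted this support claim, $\Supp E$ must contain at least two distinct closed points (and, if of positive dimension, infinitely many), so we can select a closed point $y\in\Supp E$ with $y\neq x$, completing the argument for $^{\perp}E$. The case $E^\perp$ is handled by the symmetric argument, using the second equivalence in Lemma \ref{supp lem}: if $y\in\Supp E$ is closed then $\Hom(E,k(y)[m])\neq 0$ for some $m$, so $k(y)\notin E^\perp$, while $y\neq x$ again places $k(y)$ in $\cS_X(x)$. No step other than the support claim requires genuine computation; the hypothesis $d\geq 1$ enters solely through $\chi(k(p),k(p))=0$, and the argument manifestly (and unavoidably) breaks when $d=0$, where $X$ is a point and the statement is false.
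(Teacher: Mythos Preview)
Your proposal is correct and follows essentially the same approach as the paper: assume ${}^{\perp}E$ is an ideal, use Theorem~\ref{matsui thm} and Proposition~\ref{prop Balmer} to write it as $\cS_X(x)$, rule out $\Supp E$ being a single closed point via the Euler-characteristic computation $\chi(k(p),k(p))=0$ for $d\geq 1$, and then pick a closed $y\in\Supp E$ with $y\neq x$ to contradict $k(y)\in\cS_X(x)={}^{\perp}E$ using Lemma~\ref{supp lem}. You are slightly more explicit than the paper in invoking Proposition~\ref{orthogonal prime} up front and in spelling out the symmetric argument for $E^{\perp}$, but the strategy and all key steps coincide.
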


\begin{exa}
By Proposition \ref{serre trivial} we have 
\[
\Spec_{\triangle}^{\Ser}\Db(\bP^n)=\Spec_{\otimes}\Db(\bP^n)\cong \bP^n.
\]
 By Lemma \ref{orthogonal prime} and Lemma \ref{not ideal}, the semi-orthogonal decomposition $
\Db(\bP^{n})=\l\cO(m),\cO(m+1),\hdots,\cO(m+n)\r
$
shows that 
\[
\l\cO(m+1),\hdots,\cO(m+n)\r\in \Spec_{\triangle}\Db(\bP^n)\backslash \Spec_{\triangle}^{\Ser}\Db(\bP^n).
\]
In the case when $n=1$, as  explained in \cite[Example 4.10]{matsui}, by results in \cite[Section 4.1]{ks} and \cite[Corollary 4.9]{matsui} we have  homeomorphisms
\[
\Spec_{\triangle}\Db(\bP^1)\cong \Spec_{\triangle}^{\Ser}\Db(\bP^1)\sqcup\left(\bigsqcup_{i\in\bZ}\l\cO(i)\r\right)\cong \bP^1\sqcup \bZ.
\]
\end{exa}

\end{document}